\theoremstyle{plain}
\newtheorem{lema}{Lemma}
\newtheorem{prop}[lema]{Proposition}
\newtheorem{teo}[lema]{Theorem}
\newtheorem{coro}[lema]{Corollary}
\theoremstyle{remark}
\newtheorem{obs}[lema]{Remark}
\theoremstyle{definition}
\newtheorem{defi}[lema]{Definition}
\newtheorem{ej}[lema]{Example}
\newcommand{\cat}{\textrm{CAT}}
\newcommand{\x}{\mathcal{X}}
\newcommand{\kp}{\mathcal{K}}
\newcommand{\N}{\mathbb{N}}
\newcommand{\R}{\mathbb{R}}
\newcommand{\Z}{\mathbb{Z}}
\newcommand{\Q}{\mathbb{Q}}
\newcommand{\E}{\mathfrak{E}}
\newcommand\restr{\raisebox{-0.3ex}{$|$}\raisebox{0.3ex}{}}
\def\rddots#1{\cdot_{\cdot_{\cdot_{#1}}}}
\begin{document}

\title[Cop and robber on finite spaces]{Cop and robber on finite spaces}

\author[J.A. Barmak]{Jonathan Ariel Barmak $^{\dagger}$}

\thanks{$^{\dagger}$ Researcher of CONICET. Partially supported by grants UBACyT 20020190100099BA, CONICET PIP 11220170100357CO, ANPCyT PICT-2017-2806 and ANPCyT PICT-2019-02338.}

\address{Universidad de Buenos Aires. Facultad de Ciencias Exactas y Naturales. Departamento de Matem\'atica. Buenos Aires, Argentina.}

\address{CONICET-Universidad de Buenos Aires. Instituto de Investigaciones Matem\'aticas Luis A. Santal\'o (IMAS). Buenos Aires, Argentina. }

\email{jbarmak@dm.uba.ar}

\begin{abstract}
A cop tries to capture a robber in a topological space $X$ being unable to see him. For which spaces $X$ does the cop have a strategy which allows him to capture the robber independently of his efforts to escape? In other words, when is there a curve $\gamma: \R_{\ge 0}\to X$ which has a coincidence with any other curve in $X$. We analyze in particular the case of finite topological spaces and discover general results and exotic examples about paths in these spaces.
\end{abstract}

\subjclass[2010]{54F65, 91A24, 91A44}

\keywords{Continuous pursuit-evasion, finite topological spaces.}

\maketitle

\section{Introduction}

Let $X$ be a topological space. By a curve in $X$ we mean a continuous map $\gamma: \R_{\ge 0} \to X$. We say that the cop has a strategy in $X$ if there exists a curve $\gamma$ in $X$ which has a coincidence with any other curve $\rho$ in $X$, that is a point $t\in \R_{\ge 0}$, which depends on $\rho$, such that $\gamma (t)=\rho (t)$. Otherwise we say that the robber has a strategy in $X$. For instance, it is easy to see that the cop has a strategy in the interval $[0,1]\subseteq \R$. He just moves from $0$ to $1$, and then stays still. Also, the existence of a strategy for the cop requires the space to be path-connected and to have the fixed point property: if $f:X\to X$ is fixed point free, then $f\gamma$ is a curve for the robber without coincidences with $\gamma$. Note also that a strategy for the cop is necessarily surjective. 

Different variants of this problem have been studied: (a) If the cop can see the robber at any time, he can change his path depending on the robber moves. Here the values of $\gamma(s)$ for $s\le s_0$ depend on the values of $\rho(s)$ for $s<s_0$. In that case we can define the notion of strategy for the robber in a similar way, which is not exactly the negation of the existence of a strategy for the cop \cite{Bar1}. If the cop has a strategy in our sense, then it has a strategy in this other sense. (b) If the players move in a metric space, restrictions can be put on their maximum speeds and also a positive radius of capture can be allowed, or a radius of visibility. (c) A discrete analogue of the problem can be studied in graphs, in which the robber moves from a vertex to a neighbor while the cop tries to find him by choosing any vertex (or any adjacent vertex) each turn \cite{BW, Has}. (d) There could be more than one pursuer or evader. 

We will see that with a few exceptions, the cop does not have a strategy on polyhedra (Theorem \ref{clascw}). The particular case of finite topological spaces is important for two reasons. Recently, classical dynamical systems have been studied using finite spaces. A real dynamical system can be investigated from a sampled point cloud by taking a cube grid and a multivalued map from the face poset of the grid, considered as a finite space. Conley theory has been developed for finite spaces \cite{BMW} as an alternative for proving the existence of chaos or other phenomena in classical dynamics. In continuous time dynamical systems $\phi : X\times \R_{\ge 0}\to X$, the trajectories $\phi_x:\R_{\ge 0}\to X$ satisfy the following property: if $\phi_x(t)=\phi_x(t')$, then $\phi_x(t+s)=\phi_x(t'+s)$ for every $s\ge 0$. So, if $t<t'$, a ``proper part" of $\phi_x\restr_{[t,\infty)}$ coincides with itself. A continuous time dynamical system in a finite set has necessarily constant trajectories for $t>0$, however nontrivial paths which coincide with proper parts will appear when studying the cop and robber game on finite spaces. On the other hand, our problem when posed for finite spaces, is analogous to a different pursuit-evasion problem involving regular CW-complexes. Namely, suppose that a watcher and a thief move continuously in an art gallery, whose rooms are some of the open cells of a regular cell complex $K$. The watcher cannot see the thief until they are in the same room, moment in which the watcher wins. We will show that the watcher has a strategy if and only if the cop has a strategy in the finite space of rooms ordered by the coface relation.

Path-components of finite spaces are well understood since Stong's paper \cite{Sto}. A $T_0$ topology in a finite set is equivalent to a partial order in the same set, and two points $x,y$ in a finite $T_0$ space $X$ are in the same path-component if and only if there is a sequence (called fence) $x=x_0\le x_1\ge x_2 \le \ldots x_k=y$. However, this does not mean that we understand what a path $\gamma :[0,1]\to X$ in a finite space looks like. It is a key part of this work to study properties of paths and curves in finite spaces and to exhibit examples of wild nature. Many articles about finite spaces deal only with the combinatorial interpretation they have as posets, or even less, with the simplicial complex associated to that poset (order complex). The results in this paper go in a very different direction and the essence of finite spaces is crucial in every proof.  

We have attempted to characterize those finite spaces in which the cop has a strategy. In a first stage it seemed plausible that the existence of such a strategy in a finite $T_0$ space depended only on the subspace of extrema, which consists of minimal and maximal points (Corollary \ref{coroe}). Although we managed to describe the spaces of height 1 in which there is a strategy (Theorem \ref{main1}), this does not give a direct answer to the general question (Example \ref{ye} and Corollary \ref{maincoro}). Some examples (Section \ref{sectionfractal}) which lie between the necessary and sufficient conditions obtained (Corollary \ref{maincoro} and Theorem \ref{main2}), show that further analysis is needed.

\section{General results}

We know already that the cop has a strategy in the 1-dimensional disk. The 2-dimensional case is relevant being the space in which Rado formulated his original problem of the lion and man in 1925, in which players can see each other and have equal maximum speeds \cite{Lit}. We will understand this example and much more in this section.

\begin{obs} \label{obsretract}
If the cop has a strategy in a space $X$, so does he in any retract. Indeed, if $A\subseteq X$ is a retract, with $r:X\to A$ a retraction, and $\gamma: \R_{\ge 0}\to X$ is a strategy for the cop in $X$, then $r\gamma$ is a strategy in $A$, for if $\rho:\R_{\ge 0} \to A$ is another curve, then there exists $t\in \R_{\ge 0}$ such that $\gamma(t)=i \rho (t)$ ($i:A\to X$ the inclusion) and then $r\gamma (t)=\rho (t)$. 
\end{obs}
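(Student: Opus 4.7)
The plan is to transport a winning strategy across the retraction. Let $A \subseteq X$ be a retract with retraction $r \colon X \to A$ and inclusion $i \colon A \hookrightarrow X$, so that $ri = \mathrm{id}_A$. Fix a strategy $\gamma \colon \R_{\ge 0} \to X$ for the cop in $X$. I would propose $r\gamma \colon \R_{\ge 0} \to A$ as the candidate strategy for the cop in $A$; it is continuous as the composition of two continuous maps, so it is a valid curve in $A$.

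To verify it works, take an arbitrary curve $\rho \colon \R_{\ge 0} \to A$ played by the robber in $A$. The idea is to ``lift'' the robber's motion into $X$ via the inclusion: the composition $i\rho$ is a curve in $X$, so by the defining property of $\gamma$ there exists some $t \in \R_{\ge 0}$ with $\gamma(t) = i\rho(t)$. Applying $r$ to both sides and using $ri = \mathrm{id}_A$ gives $r\gamma(t) = \rho(t)$, which is exactly the coincidence needed for $r\gamma$ to be a strategy in $A$.

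There is no real obstacle here: the argument is essentially a one-line diagram chase that uses only continuity of $r$ and the retraction identity. The only thing worth flagging is that this is a one-way implication — a strategy in $A$ need not lift back to $X$, since a curve in $X$ is not in general the image under $i$ of a curve in $A$.
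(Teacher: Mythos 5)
Your argument is correct and is exactly the paper's own proof: push the cop's strategy forward by $r$, lift the robber's curve into $X$ via $i$, and apply $r$ to the resulting coincidence using $ri=\mathrm{id}_A$. Nothing to add.
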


A \textit{subdivision} of $\R_{\ge 0}$ is a collection of closed intervals of $\R_{\ge 0}$ such that: no member of the collection is a singleton, the collection covers $\R_{\ge 0}$, the intersection of two different members of the collection is empty or a singleton, and the collection is locally finite (for every $t\in \R_{\ge 0}$ there is a neighborhood intersecting only finitely many members). This last condition is equivalent to requiring that each $t\in \R_{\ge 0}$ is either in the interior of one interval or in the boundary of exactly two intervals. Note that a subdivision of $\R_{\ge 0}$ contains a unique unbounded interval of the form $[a,\infty)$ or otherwise all its members are bounded. A subdivision of a compact real interval $[a,b]$ is defined in the same way. The last condition implies that the collection in this case is finite. In this second case our definition is the standard definition of subdivision.

Let $X$ be a topological space, $\mathcal{U}$ an open cover of $X$, and $\gamma:\R_{\ge 0} \to X$ a curve (or $\gamma:[a,b]\to X$ a path). A subdivision of $\R_{\ge 0}$ (or $[a,b]$) is said to be \textit{$\gamma$-admissible} if each interval is contained in the preimage $\gamma^{-1}(U)$ of some $U\in \mathcal{U}$. Every time we have a $\gamma$-admissible subdivision of $\R_{\ge 0}$ (or $[a,b]$) we will choose for each interval $I$ an open set $U\in \mathcal{U}$ containing $\gamma(I)$ (this may be done in more than one way). A $\gamma$-admissible subdivision is \textit{minimal} if there are no adjacent intervals which are contained in the same $\gamma^{-1}(U)$.

By a Lebesgue number argument it is easy to prove that for any space $X$, any open cover $\mathcal{U}$ and any path $\gamma:[a,b]\to X$, there exists a $\gamma$-admissible subdivision of $[a,b]$. By gluing adjacent intervals, this can be turned into a minimal $\gamma$-admissible subdivision. If we now consider a curve $\gamma':\R_{\ge 0}\to X$, then there exists a minimal $\gamma'$-admissible subdivision of $\R_{\ge0}$, but this requires a more delicate argument. The details can be found in Theorem \ref{admissible} in the appendix. In most of the results in this article we will only need the path version.
 
\begin{lema} \label{lemacuatroabiertos}
Let $X$ be a topological space and let $\mathcal{U}=\{A_0,A_1,A_2,B\}$ be an open cover of $X$ such that $A_0,A_1,A_2$ are pairwise disjoint. Moreover, suppose there exist $a_i\in A_i\smallsetminus B$ for each $i=0,1,2$, $b\in B$ and a path $\omega_i$ in $X$ from $b$ to $a_i$ whose image does not intersect $A_{j}$ for $j\neq i$. Then the robber has a strategy in $X$.
\end{lema}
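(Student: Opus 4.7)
Given a cop's curve $\gamma: \R_{\ge 0} \to X$, I apply Theorem \ref{admissible} to obtain a minimal $\gamma$-admissible subdivision $(I_k)$ of $\R_{\ge 0}$ relative to $\mathcal{U}=\{A_0,A_1,A_2,B\}$, fixing for each $I_k$ a label $U_k \in \mathcal{U}$ with $\gamma(I_k) \subseteq U_k$. Minimality implies in particular that no two adjacent intervals can both be labelled $B$, since otherwise their union would also lie in $\gamma^{-1}(B)$. Similarly no two adjacent intervals carry the same label $A_j$. Note also that the hypothesis on the $\omega_i$ forces $b \notin A_0 \cup A_1 \cup A_2$.

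The robber's plan is to occupy one of the three points $a_0, a_1, a_2$ at every transition time $t_k$, and to commute among them through $b$ using paths of the form $\omega_i^{-1} \cdot \omega_{i'}$. The key geometric observation is: on an $A_j$-interval the cop is confined to $A_j$, so the robber is safe anywhere in $X \setminus A_j$, and by hypothesis the concatenation $\omega_i^{-1} \cdot \omega_{i'}$ stays outside $A_j$ whenever $i, i' \neq j$; on a $B$-interval every $a_i$ is safe (since $a_i \notin B$), but the robber cannot traverse $b$ and must remain constant for the duration of that interval.

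The combinatorial heart of the argument is choosing a sequence $c_k \in \{0,1,2\}$, which will encode $\rho(t_k) := a_{c_k}$, satisfying: if $U_k = A_j$ then $c_k, c_{k+1} \neq j$; if $U_k = B$ then $c_{k+1} = c_k$. Because each $B$-interval is flanked by $A$-intervals (and consecutive $A$-intervals carry distinct indices), any single $c_k$ or any pair $\{c_k, c_{k+1}\}$ collapsed by a $B$-equality is excluded from at most two of the three values $\{0,1,2\}$, leaving at least one admissible choice. A short recursive construction with one-step lookahead then produces the full sequence, with routine handling of the initial index and of the possibly unbounded terminal interval.

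With $(c_k)$ fixed, I define $\rho$ on each $I_k$ as follows: if $U_k = B$ or $c_k = c_{k+1}$, let $\rho|_{I_k}$ be constant equal to $a_{c_k}$; otherwise let $\rho|_{I_k}$ be an affine reparametrization of $\omega_{c_k}^{-1} \cdot \omega_{c_{k+1}}$. The pieces glue into a continuous curve $\rho: \R_{\ge 0}\to X$ because consecutive endpoints agree at $a_{c_k}$, and by construction $\rho(I_k) \cap U_k = \emptyset$ in every case, so $\gamma(t) \neq \rho(t)$ for every $t \in \R_{\ge 0}$. The main obstacle I anticipate is the discrete bookkeeping for the sequence $(c_k)$, especially in the presence of the equality constraints imposed by $B$-intervals; once minimality of the subdivision is used, the remaining verifications follow directly from the disjointness of the $A_i$ and from the avoidance hypothesis on the $\omega_i$.
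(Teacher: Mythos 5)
Your argument is correct and essentially the same as the paper's: both take a minimal $\gamma$-admissible subdivision, observe that the labels alternate between $B$ and the $A_j$'s, and have the robber shuttle among $a_0,a_1,a_2$ through $b$, choosing for each $B$-block an index that avoids the (at most two) indices of the flanking $A$-intervals; the paper merely shifts the phase, basing its loops at $b$ at the midpoints of the $A$-intervals and parking at $a_k$ throughout the $B$-interval. One small caveat: successive $A$-intervals separated by a $B$-interval may well carry the \emph{same} index (the paper explicitly allows $j=i$), so your parenthetical to the contrary is inaccurate, but your count of at most two excluded values per identification class does not rely on it.
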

\begin{proof}
Let $\gamma :\R_{\ge 0} \to X$ be a curve. We must show there is another curve $\rho$ having no coincidence with $\gamma$. By Theorem \ref{admissible} there is a minimal $\gamma$-admissible subdivision of $\R_{\ge 0}$. Since the subdivision is locally finite, a function $\R_{\ge 0} \to X$ is continuous if and only if it is continuous in each interval. Assume $[t_0,t_1]$, $[t_1,t_2]$, $[t_2,t_3]$ are three adjacent intervals in the subdivision and suppose $\gamma$ maps $[t_0,t_1]$ to some $A_i$. Since the $A_j$ are pairwise disjoint, by minimality of the subdivision $\gamma([t_1,t_2])\subseteq B$ and $\gamma([t_2,t_3])\subseteq A_j$ for some $j$ which could be equal to $i$ or not. Let $k\in \{0,1,2\}$ be different from $i,j$. Let $s_0=\frac{t_0+t_1}{2}$ be the center of the first interval, and $s_2$ the center of the third one. We define $\rho$ in $[s_0,s_2]$ as follows: $\rho\restr_{[s_0,t_1]}$ is (a reparametrization of) $\omega_k$, $\rho\restr_{[t_1,t_2]}$ is the constant path at $a_k$ and $\rho\restr_{[t_2,s_2]}$ is (a reparametrization of) $\overline{\omega}_k$, the inverse of $\omega_k$. See Figure \ref{figcuatroabiertos}.

\begin{figure}[h] 
\begin{center}
\includegraphics[scale=0.55]{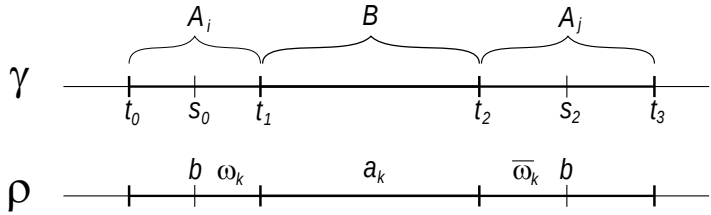}
\caption{A curve avoiding $\gamma$ in Lemma \ref{lemacuatroabiertos}.}\label{figcuatroabiertos}
\end{center}
\end{figure}

Thus $\rho\restr_{[s_0,s_2]}$ is a continuous loop at $b$. By hypothesis $\rho\restr_{[s_0,s_2]}$ and $\gamma\restr_{[s_0,s_2]}$ are coincidence-free. We define $\rho$ in this way for each triple of intervals as above. If the subdivision of $\R_{\ge 0}$ contains no unbounded interval and the first interval $I_1=[0,t_1]$ is mapped by $\gamma$ into $A_i$, then $\rho$ can be defined in the first half of $I_1$ as the inverse of the path already defined in the second half (or just as a constant path). This map is well-defined, continuous, and has no coincidence with $\gamma$. If there are no unbounded intervals, but the first interval $I_1$ is mapped to $B$, while the second $I_2$ is mapped to $A_i$, then we define $\rho$ in the first half of $I_2$ as the inverse of the path defined in the second half, and we define $\rho$ to be constant in $I_1$. In the case that there is an unbounded interval $[t, \infty)$ in the subdivision (in particular when the subdivision has just one or two members), $\rho$ can be defined with similar ideas, and details are left to the reader.  
\end{proof}

\begin{ej} \label{ejy}

Let $Y$ be a union of three compact real intervals $[a_0,b_0]$, $[a_1,b_1]$, $[a_2,b_2]$ identifying $b_0,b_1,b_2$ (see Figure \ref{figcapacitor}).

\begin{figure}[h] 
\begin{center}
\includegraphics[scale=0.55]{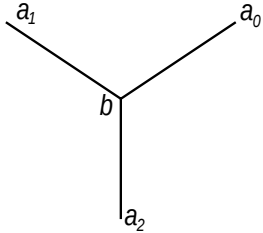}
\caption{The space $Y$.}\label{figcapacitor}
\end{center}
\end{figure}

Define for each $i=0,1,2$, $A_i=[a_i,b_i)$, and $B=Y\smallsetminus \{a_0,a_1,a_2\}$. Let $\omega_i$ be a path from $b_i$ to $a_i$ in $[a_i,b_i]$. Then Lemma \ref{lemacuatroabiertos} applies, so the robber has a strategy in $Y$.  

\end{ej}

In particular the robber has a strategy in the 2-dimensional disk $D^2$, since the space $Y$ is a retract. If we impose a bound for the maximum speed of the robber, and not for the cop, then even when the robber is invisible, the cop has a strategy in $D^2$ \cite[Problema 7]{olimpiada}. We have the following classification.

\begin{teo} \label{clascw}
Let $K$ be a CW-complex. Then the cop has a strategy in $K$ if and only if $K$ is homeomorphic to the interval $[0,1]$ or it is the singleton.
\end{teo}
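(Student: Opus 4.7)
The forward implication is observed in the introduction. For the converse, assume the cop has a strategy on the CW-complex $K$; by the introduction $K$ is path-connected and satisfies the fixed point property (FPP). I aim to locate in each $K$ that is not a point or $[0,1]$ either a configuration triggering Lemma \ref{lemacuatroabiertos} or a fixed-point-free continuous self-map.

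Suppose first that $K$ has a cell of dimension $\ge 2$. Its open interior is open in $K$ and homeomorphic to some $\R^n$ with $n\ge 2$. Inside it, fix a point $b$ and three points $a_0,a_1,a_2$ at the vertices of a small equilateral triangle centred at $b$ in a $2$-plane, together with pairwise disjoint small open balls $A_i\ni a_i$ chosen so small that the straight segment $\omega_i$ from $b$ to $a_i$ avoids $A_j$ for every $j\ne i$. Setting $B=K\smallsetminus\{a_0,a_1,a_2\}$ (open, because $K$ is $T_1$) gives an open cover $\{A_0,A_1,A_2,B\}$ of $K$ meeting the hypotheses of Lemma \ref{lemacuatroabiertos}, contradicting our assumption. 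Similarly, if $K$ is $1$-dimensional and some vertex $v$ is incident to at least three $1$-cell ends (loops counted twice), I pick three distinct incidences, in the $i$-th one a point $a_i$ in the open $1$-cell near $v$ and a small open subinterval $A_i\ni a_i$ inside that open cell; then with $b=v$, $B=K\smallsetminus\{a_0,a_1,a_2\}$ and $\omega_i$ the subarc of the $i$-th edge from $v$ to $a_i$, Lemma \ref{lemacuatroabiertos} again gives the robber a strategy.

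Thus every vertex of the $1$-dimensional space $K$ has degree at most $2$, and since $K$ is connected it is homeomorphic to a point, $[0,1]$, $[0,\infty)$, $\R$ or $S^1$. Each of $[0,\infty)$, $\R$ and $S^1$ admits a fixed-point-free continuous self-map (translation by $1$ for the first two; a nontrivial rotation for $S^1$), violating the FPP. Only a point and $[0,1]$ remain, as required. The main technical step is verifying the hypotheses of Lemma \ref{lemacuatroabiertos} in both applications; the simplifying trick is that by taking $B$ to be the complement of $\{a_0,a_1,a_2\}$ the covering property and the openness of $B$ become automatic, localising the entire construction to a single cell of $K$.
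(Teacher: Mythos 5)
Your overall route differs from the paper's: where the paper reduces the high-dimensional and branching cases to the triod $Y$ of Example \ref{ejy} via retractions (Remark \ref{obsretract}), you apply Lemma \ref{lemacuatroabiertos} directly inside $K$, which spares you from constructing those retractions. The $1$-dimensional analysis (degree $\ge 3$ gives the lemma's configuration; otherwise $K$ is a point, $[0,1]$, $[0,\infty)$, $\R$ or $S^1$, and the last three fail the fixed point property) is correct and matches the paper's in substance.

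There is, however, one genuine error in the case $\dim K\ge 2$: the claim that the open cell of dimension $n\ge 2$ ``is open in $K$'' is false in general. An open cell is open in $K$ only when it is not a proper face of another cell; e.g.\ an open $2$-cell in the boundary of a $3$-cell is not open, and in an infinite-dimensional complex such as $S^\infty$ \emph{no} open cell is open. Consequently your sets $A_i$, being open balls \emph{in the cell}, need not be open in $K$, and $\{A_0,A_1,A_2,B\}$ need not be an open cover, so Lemma \ref{lemacuatroabiertos} does not apply as you invoke it. The gap is repairable without changing your strategy: keep $a_i$, $b$ and the segments $\omega_i$ inside the open cell, observe that $C_j=\bigcup_{i\ne j}\omega_i([0,1])$ is a compact (hence closed, $K$ being Hausdorff) set not containing $a_j$, and take $A_j=V_j\smallsetminus C_j$ where $V_0,V_1,V_2$ are pairwise disjoint open neighborhoods of $a_0,a_1,a_2$ \emph{in $K$}. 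These $A_j$ are open in $K$, pairwise disjoint, contain $a_j$, and miss $\omega_i$ for $i\ne j$, so with $B=K\smallsetminus\{a_0,a_1,a_2\}$ all hypotheses of Lemma \ref{lemacuatroabiertos} hold. With this correction your argument goes through; alternatively you could follow the paper and retract $K$ onto a copy of $Y$.
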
 
\begin{proof}
If $\dim (K)\ge 2$, the space $Y$ in Example \ref{ejy} is a retract of $K$, so the robber has a strategy by Remark \ref{obsretract}. If $\dim (K)\le 1$ and $K$ has a cycle, then $S^1$ is a retract, and since it lacks the fixed point property, the robber has a strategy in $K$. If $K$ is disconnected, the robber has a strategy. If $K$ is a tree with a vertex of degree greater than or equal to $3$, then $Y$ is a retract. If $K$ is a tree with every vertex of degree smaller than or equal to $2$, it is homeomorphic to $(0,1)$, $[0,1)$, $[0,1]$ or the singleton. In the first two cases there is a fixed point free map, and in the latter two the cop has a strategy.
\end{proof}

Recall that a space $X$ satisfies the separation axiom $T_0$ if for any pair of different points there exists an open set containing exactly one of them.

\begin{prop}
If a topological space is not $T_0$, then the robber has a strategy.
\end{prop}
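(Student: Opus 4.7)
The plan is to exploit the fact that in a non-$T_0$ space there exist two topologically indistinguishable points, and that the two-point subspace they span carries the indiscrete topology, so \emph{any} map into that subspace is continuous.

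First I would unpack the hypothesis: since $X$ is not $T_0$, there exist two distinct points $x,y\in X$ such that for every open set $U\subseteq X$, either $\{x,y\}\subseteq U$ or $\{x,y\}\cap U=\emptyset$. In particular, the subspace topology on $\{x,y\}$ is the indiscrete topology, so \emph{every} function from a topological space into $\{x,y\}\subseteq X$ is continuous.

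Next, given an arbitrary curve $\gamma:\R_{\ge 0}\to X$ proposed as the cop's strategy, I would define a curve $\rho:\R_{\ge 0}\to X$ for the robber by
\[
\rho(t)=\begin{cases} y & \text{if } \gamma(t)=x,\\ x & \text{otherwise.}\end{cases}
\]
Since $\rho$ only takes values in $\{x,y\}$, for any open $U\subseteq X$ the preimage $\rho^{-1}(U)$ equals either $\R_{\ge 0}$ (if $x,y\in U$) or $\emptyset$ (if $x,y\notin U$), so $\rho$ is continuous. By construction, $\rho(t)\neq \gamma(t)$ for every $t\in \R_{\ge 0}$: if $\gamma(t)=x$ then $\rho(t)=y$, and otherwise $\rho(t)=x\neq \gamma(t)$. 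Thus $\rho$ is a coincidence-free curve for the robber, and since $\gamma$ was arbitrary, the robber has a strategy.

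There is no real obstacle here; the whole argument turns on the single observation that the two indistinguishable points form an indiscrete subspace, which makes the robber's response automatically continuous without any of the subdivision machinery needed in Lemma \ref{lemacuatroabiertos}.
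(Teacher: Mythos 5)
Your proof is correct and is essentially the paper's argument: your curve $\rho$ is exactly $f\circ\gamma$ for the fixed-point-free map $f$ sending $x\mapsto y$ and everything else to $x$, which is how the paper phrases it, and your continuity check (open sets contain both of $x,y$ or neither) is the same observation. No meaningful difference in approach.
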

\begin{proof}
Let $X$ be a space with two different points $x,y$ such that any open set containing one of them also contains the other. Then the map $X\to X$ which maps $x$ to $y$ and all the remaining points to $x$, is continuous and fixed point free.
\end{proof}

The cop can have a strategy in non-compact spaces as the next example shows.

\begin{ej} \label{ejnocompacto}
Let $X=\Z_{\ge 0}$ with the topology generated by the basis $\{\{0\}\}\cup \{\{0,n\} : n\ge 1\}$. Then $X$ is non-compact as the basis defined admits no finite subcover. Define $\gamma: \R_{\ge 0} \to X$ by $\gamma (n)=n$ for every $n\in \Z_{\ge 0}$ and $\gamma (t)=0$ if $t\notin \Z_{\ge 0}$. Then $\gamma$ is continuous and it is a strategy for the cop. If $\rho$ is a curve in $X$ which passes through $0$, then $\rho^{-1}(0) \subseteq \R_{\ge 0}$ is open and non-empty. Since $\gamma^{-1}(0)$ is dense, $\rho$ and $\gamma$ have a coincidence point. If $\rho$ does not go through $0$, then it is constant as $\Z_{\ge 1}$ is a discrete subspace of $X$. Thus $\rho$ and $\gamma$ have a coincidence also in this case.
\end{ej}

 \label{defstrong}
We say that the cop has a \textit{strong strategy} in a space $X$ if there exists a path $\gamma:[0,1]\to X$ which has a coincidence with every other path in $X$. The path $\gamma$ (and any linear reparametrization $[t,t']\to X$) is then called a \textit{strong strategy} in $X$. In this case the curve $\R_{\ge 0}\to X$ which coincides with $\gamma$ in $[0,1]$ and is constant in $\R_{\ge 1}$ is a (regular) strategy for the cop. On the other hand, the cop may have a strategy in a space $X$ and not a strong strategy. Indeed, a necessary condition for the existence of a strong strategy is compactness, so we can take as $X$ the space in Example \ref{ejnocompacto}.

Note that if the cop has a strong strategy in a Hausdorff space $X$, then $X$ is necessarily metrizable by the Hahn-Mazurkiewicz Theorem.

Example \ref{ejnocompacto} can be better understood in the language of finite (or more generally, Alexandroff) spaces. Finite spaces are rarely Hausdorff. In fact, a finite $T_1$ space is always discrete. Finite $T_0$ spaces are relevant in Homotopy Theory and Dynamical Systems. The rest of the paper will be focused on the cop and robber problem for finite spaces. Our first result is very elementary and does not require any previous knowledge about these spaces.

\begin{prop} \label{lomismo}
Let $X$ be a finite topological space. If the cop has a strategy in $X$, then it also has a strong strategy in $X$.
\end{prop}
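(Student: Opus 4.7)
The plan is to take the given strategy $\gamma:\R_{\ge 0}\to X$ and show that some initial segment $\gamma|_{[0,N]}$, linearly reparametrized to $[0,1]$, is already a strong strategy. For each $n\in \N$ let $\mathcal{R}_n$ denote the set of continuous paths $\sigma:[0,n]\to X$ with $\sigma(t)\neq \gamma(t)$ for every $t\in [0,n]$. The path $\gamma|_{[0,n]}$ (reparametrized linearly to $[0,1]$) is a strong strategy precisely when $\mathcal{R}_n=\emptyset$, so it is enough to exhibit some $N$ with $\mathcal{R}_N=\emptyset$.

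I would argue by contradiction: suppose $\mathcal{R}_n\neq \emptyset$ for every $n$, and produce a curve $\rho:\R_{\ge 0}\to X$ with no coincidence with $\gamma$, violating the strategy hypothesis. The mechanism is K\"onig's lemma, for which the finiteness of $X$ is essential. Form the rooted tree $\mathcal{T}$ whose vertices are a virtual root $*$ together with all tuples $(x_0,\ldots,x_n)$, $n\ge 0$, that arise as $(\sigma(0),\ldots,\sigma(n))$ for some $\sigma\in \mathcal{R}_n$; the parent of $(x_0,\ldots,x_n)$ is $(x_0,\ldots,x_{n-1})$ for $n\ge 1$, and the parent of $(x_0)$ is $*$. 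Restricting any witness $\sigma\in \mathcal{R}_n$ to $[0,n-1]$ shows that the parent of a vertex is a vertex, so $\mathcal{T}$ is well defined; since $|X|<\infty$, each vertex has at most $|X|$ children, so $\mathcal{T}$ is finitely branching; and the hypothesis $\mathcal{R}_n\neq \emptyset$ supplies a vertex at every depth. K\"onig's lemma then yields an infinite branch, which gives a sequence $x_0,x_1,x_2,\ldots$ in $X$ and, for each $n$, a witness $\sigma^{(n)}\in \mathcal{R}_n$ with $\sigma^{(n)}(i)=x_i$ for $0\le i\le n$.

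To assemble $\rho$, set $\rho|_{[i,i+1]}:=\sigma^{(i+1)}|_{[i,i+1]}$. At each integer $i$ both adjoining pieces take the common value $x_i=\sigma^{(i)}(i)=\sigma^{(i+1)}(i)$, so the pasting lemma gives continuity of $\rho$ on $\R_{\ge 0}$; and on each $[i,i+1]$ the curve $\rho$ avoids $\gamma$ because $\sigma^{(i+1)}\in \mathcal{R}_{i+1}$. Hence $\rho(t)\neq \gamma(t)$ for every $t\ge 0$, contradicting the assumption that $\gamma$ is a strategy.

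The main subtlety is the choice of vertex labels: recording the whole tuple of integer values (rather than just the current state) is what makes the concatenation continuous without any extra combinatorial argument, because successive witnesses $\sigma^{(i)}$ and $\sigma^{(i+1)}$ are forced to agree at the integer $i$. Once this bookkeeping is in place, the finiteness of $X$ feeds directly into the finite-branching hypothesis of K\"onig and everything else is formal.
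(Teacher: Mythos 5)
Your proof is correct and is essentially the paper's argument in a different dress: the paper also restricts $\gamma$ to $[0,n]$, takes avoiding paths $\rho_n$, and uses finiteness of $X$ to pin down the values at integer points, doing so by iterated extraction of subsequences (a diagonal argument) rather than by K\"onig's lemma. Your bookkeeping with full tuples $(x_0,\ldots,x_n)$ makes the continuity of the glued curve at the integer junctions particularly transparent, but the underlying compactness mechanism is the same.
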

\begin{proof}
Assume there is no strong strategy. Let $\gamma: \R_{\ge 0}\to X$ be a curve in $X$. We prove that there exists another curve in $X$ having no coincidence with $\gamma$.

For every $n\ge 1$, let $\gamma_n$ be the restriction of $\gamma$ to $[0,n]$. By assumption, for every $n$ there exists a path $\rho_n:[0,n]\to X$ having no coincidence with $\gamma_n$. Since $X$ is finite, there exists a subsequence $(\rho_{n_{k_1}}(1))_{k_1\in \N}$ of $(\rho_{n}(1))_{n\ge 1}$ which is constant. There is a subsequence $(\rho_{n_{{k_1}_{k_2}}}(2))_{k_2\in \N}$ of $(\rho_{n_{k_1}}(2))_{k_1\ge 2}$ which is constant, and so on. We define $\rho:\R_{\ge 0}\to X$ to be equal to $\rho_{n_{{k_1}_{{k_2}_{\rddots {{k_m}_1}}}}}$ in the interval $[m,m+1]$ for each $m\ge 0$. Clearly $\rho$ is well-defined, continuous and has no coincidence with $\gamma$.
\end{proof}

\section{Finite spaces, the lifting Theorem and the art gallery}

If $X$ is a finite poset, there is a topology in $X$ given by the down-sets, i.e. subsets $U\subseteq X$ which satisfy that $x\le y$ and $y\in U$ implies $x\in U$. This topology is $T_0$. Conversely, if $X$ is a finite $T_0$ space, then for every $x\in X$ we define the minimal open set $U_x$ as the intersection of all the open sets containing $x$. The minimal open sets are open and in fact they are a basis for the topology. We define an order in $X$ by $x\le y$ if $x\in U_y$. These two maps which associate a topology to every order and an order to each topology are mutually inverse \cite{Ale}. In view of this correspondence we consider finite $T_0$ spaces and finite posets as the same thing. A map $f:X\to Y$ between finite $T_0$ spaces is continuous if and only if it is order-preserving, that is $x\le x'$ implies $f(x)\le f(x')$. In finite spaces connected and path-connected components coincide. Two points $x,y$ in a finite $T_0$ space are in the same component if and only if there exists a fence $x=x_0\le x_1\ge \ldots x_n=y$ from $x$ to $y$.

Finite $T_0$ spaces will be represented by their Hasse diagram, a digraph whose vertices are the points of the space and with an arrow from $x$ to $y$ if $x$ is covered by $y$, i.e. $x<y$ and there is no $x<z<y$. In the graphical representation of Hasse diagrams all arrows are assumed to point upwards.

Since for a point $x$ in a finite space $X$, $U_x=\{y\in X | \ y\le x\}$ is the smallest open set containing $x$, a sequence $(x_n)_{n\in \N}$ converges to a $x$ if and only if for some $n_0\in \N$, $x_n\le x$ for every $n\ge n_0$.

Given a finite $T_0$ space $X$, we denote by $\kp(X)$ its order complex. This is the simplicial complex whose simplices are the non-empty chains of $X$. Let $\mu : \kp(X)\to X$ be the map which maps every point in the interior of the simplex $x_0<x_1<\ldots<x_n$ to $x_0$. Then $\mu$ is continuous and moreover it is a weak homotopy equivalence (\cite[Theorem 2]{Mcc} and \cite[Corollary 3.7]{Wof}). If $K$ is a finite simplicial complex, we denote by $\x(K)$ the poset of simplices of $K$ ordered by inclusion (face poset). Since $\kp (\x (K))$ is the first barycentric subdivision of $K$, there is also a weak homotopy equivalence $K\to \x(K)$.

If $X$ is a finite $T_0$ space, we denote by $X^{op}$ the space with the same underlying set and whose open sets are the closed sets of $X$. The order of the poset $X^{op}$ is the opposite of the order in $X$.

Recall that a space $X$ is said to be perfectly normal if for any disjoint closed sets $F,H\subseteq X$, there exists a map $f:X\to [0,1]$ such that $f^{-1}(0)=F$ and $f^{-1}(1)=H$. For instance all metric spaces satisfy this axiom. In \cite[Theorem 3.5]{Wof} Wofsey proved a result which has the following particular case.

\begin{teo}[Wofsey's lifting theorem] \label{wofsey}
Let $X$ be a finie $T_0$ space, $Y$ a perfectly normal topological space and $f:Y\to X$ a continuous map. Then there exists $\widetilde{f}:Y\to \kp(X)$ such that $\mu \widetilde{f}=f$.
\end{teo}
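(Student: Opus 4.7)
The plan is to proceed by induction on $|X|$. The base case $|X|=1$ is trivial, since then $\kp(X)=X$ and $\mu$ is the identity, so $\widetilde{f}=f$ works.

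For the inductive step, I would fix a maximal element $m\in X$ and set $X'=X\smallsetminus\{m\}$ and $W=f^{-1}(m)$. Because $m$ is maximal, $\{m\}$ is closed in $X$, so $W$ is closed in $Y$, and its complement $V=Y\smallsetminus W$ is open, hence itself perfectly normal (perfect normality is inherited by open subspaces). Since $f|_V$ factors through $X'$, the inductive hypothesis provides a lift $\widetilde{g}\colon V\to\kp(X')\subseteq\kp(X)$. The plan is to extend $\widetilde{g}$ to $Y$ by declaring $\widetilde{f}\equiv m$ on $W$ and interpolating inside the closed star $m\ast\kp(X_{<m})$ of $m$ in $\kp(X)$. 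Using perfect normality of $Y$, pick a continuous $t\colon Y\to[0,1]$ with $t^{-1}(1)=W$ and $t^{-1}(0)=Y\smallsetminus f^{-1}(U_m)$ (these are disjoint closed sets, since $W$ is closed and $f^{-1}(U_m)$ is open), and put
\[
\widetilde{f}(y)=\begin{cases} m & \text{if } y\in W,\\ (1-t(y))\widetilde{g}(y)+t(y)\,m & \text{if } y\in f^{-1}(X_{<m}),\\ \widetilde{g}(y) & \text{if } y\in f^{-1}(X\smallsetminus U_m).\end{cases}
\]

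The main obstacle is that the convex combination in the middle case is a legitimate point of $\kp(X)$ only when the simplex of $\kp(X')$ supporting $\widetilde{g}(y)$ extends by adjoining $m$, i.e., exactly when $\widetilde{g}(y)\in\kp(X_{<m})$. The bare inductive statement only controls the minimum of the chain underlying $\widetilde{g}(y)$ (it equals $f(y)<m$); the remaining vertices of that chain could well be incomparable with $m$, in which case $\widetilde{g}(y)$ and $m$ do not span a simplex. I would therefore strengthen the inductive hypothesis by also demanding $\widetilde{g}(f^{-1}(X_{<m}))\subseteq\kp(X_{<m})$. This strengthening is established by a nested recursion: apply the strengthened hypothesis (already available on the strictly smaller poset $X_{<m}$) to $f|_{f^{-1}(X_{<m})}\colon f^{-1}(X_{<m})\to X_{<m}$ to produce a preliminary lift $\widetilde{h}$ into $\kp(X_{<m})$, and then iterate the outer cone construction, with the already-chosen values $\widetilde{h}$ on the open set $f^{-1}(X_{<m})$ playing the role of the boundary values and $f^{-1}(X\smallsetminus U_m)$ playing the role of the closed stratum, to extend across $V$ into $\kp(X')$.

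Once this compatible $\widetilde{g}$ is in hand, continuity of $\widetilde{f}$ at a point $y\in W$ follows from $t(y')\to 1$ together with compactness of $\kp(X')$, which forces the middle expression to converge to $m=\widetilde{f}(y)$; continuity elsewhere is transparent from the formulas, since $t$ and $\widetilde{g}$ are continuous and the three cases agree on overlaps where $t(y)=0$ or $t(y)=1$. Finally, $\mu\widetilde{f}=f$ holds in each of the three regions by direct inspection, because in every case the chain supporting $\widetilde{f}(y)$ has minimum $f(y)$ by construction.
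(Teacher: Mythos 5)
The paper does not actually prove this statement: it is quoted verbatim from Wofsey \cite[Theorem 3.5]{Wof}, so there is no in-paper argument to compare yours against, and I can only judge your proposal on its own terms. It has a genuine gap, located exactly at the point you flagged. The strengthened inductive hypothesis $\widetilde{g}(f^{-1}(X_{<m}))\subseteq\kp(X_{<m})$ is not merely delicate to arrange --- it is false. Take $X=\{d,m,n\}$ with $d<m$, $d<n$ and $m,n$ maximal and incomparable, $Y=[0,1]$, and $f$ equal to $m$ at $0$, to $n$ at $1$, and to $d$ on $(0,1)$; this $f$ is continuous. Removing the maximal point $m$ gives $W=\{0\}$, $V=(0,1]$, $X'=\{d,n\}$ and $X_{<m}=\{d\}$. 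Since $n$ is maximal, $\mu^{-1}(n)$ is the single vertex $n$, so any lift $\widetilde{g}\colon V\to\kp(X')$ must satisfy $\widetilde{g}(1)=n$ and hence, by continuity, $\widetilde{g}(y)\notin\kp(X_{<m})=\{d\}$ for $y$ near $1$, even though $(0,1)\subseteq f^{-1}(X_{<m})$. The same example defeats the nested recursion: the preliminary lift $\widetilde{h}\equiv d$ on $f^{-1}(X_{<m})=(0,1)$ admits no continuous extension to a lift on $V$, because the value at $1$ is forced to be the vertex $n$, which lies at positive distance from the closed subcomplex $\kp(\{d\})$. So the relative extension step (``already-chosen boundary values on the open stratum'') is an extension problem that in general has no solution; and with an honest $\widetilde{g}$ (say $\widetilde{g}(y)=(1-y)d+yn$) your middle formula produces convex combinations supported on $\{d,n,m\}$, which is not a chain, hence not a point of $\kp(X)$.

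The underlying difficulty is that your choice $t^{-1}(0)=Y\smallsetminus f^{-1}(U_m)$ confines $\widetilde{f}$ to the closed star $m\ast\kp(X_{<m})$ on all of $f^{-1}(U_m)\smallsetminus W$, whereas a lift must be free to drift toward vertices outside that star wherever $f$ jumps from a point below $m$ up to a maximal point other than $m$. Repairing this would require either choosing the coning region adaptively after $\widetilde{g}$ is known (and an arbitrary $\widetilde{g}$ supplied by the unstrengthened induction need not leave room for any such region), or reorganizing the induction, e.g.\ around a \emph{minimal} point $m$: there the analogous compatibility $\widetilde{g}\bigl(f^{-1}(F_m\smallsetminus\{m\})\bigr)\subseteq\kp(F_m\smallsetminus\{m\})$ holds automatically, because $\mu\widetilde{g}=f$ already forces the supporting chain of $\widetilde{g}(y)$ to lie above $f(y)$ --- though one then faces a different extension problem over the open set $f^{-1}(m)$. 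As written, your induction does not close.
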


We have the following immediate consequence.

\begin{coro}
Let $X$ be a finite $T_0$ space such that $\kp(X)$ is a space in which the cop has a strategy. Then the cop has a strategy in $X$ as well.
\end{coro}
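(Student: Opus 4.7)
The plan is to pull back a strategy from $\kp(X)$ to $X$ by post-composing with $\mu$, and to use Wofsey's lifting theorem to lift any curve in $X$ up to $\kp(X)$ where we can invoke the given strategy.

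Concretely, suppose $\gamma:\R_{\ge 0}\to \kp(X)$ is a strategy for the cop in $\kp(X)$. I would define $\gamma':=\mu\gamma:\R_{\ge 0}\to X$, which is continuous as a composition of continuous maps, and claim it is a strategy in $X$. To verify this, let $\rho:\R_{\ge 0}\to X$ be any curve. Since $\R_{\ge 0}$ is a metric space it is perfectly normal, so Theorem \ref{wofsey} furnishes a lift $\widetilde{\rho}:\R_{\ge 0}\to \kp(X)$ with $\mu\widetilde{\rho}=\rho$. Applying the strategy hypothesis to the curve $\widetilde{\rho}$, there is some $t\in \R_{\ge 0}$ with $\gamma(t)=\widetilde{\rho}(t)$. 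Applying $\mu$ to both sides gives $\gamma'(t)=\mu\gamma(t)=\mu\widetilde{\rho}(t)=\rho(t)$, which is the desired coincidence.

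The main (and essentially only) obstacle is having a lifting result strong enough to replace an arbitrary curve in the finite space $X$ by a curve in the much better behaved space $\kp(X)$; this is precisely what Theorem \ref{wofsey} provides, once we observe that the domain $\R_{\ge 0}$ fulfills its perfect normality hypothesis. The rest is formal manipulation using $\mu\widetilde{\rho}=\rho$.
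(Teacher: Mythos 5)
Your proof is correct and is essentially identical to the paper's own argument: post-compose the strategy with $\mu$, lift the robber's curve via Wofsey's theorem (applicable since $\R_{\ge 0}$ is metrizable hence perfectly normal), and transport the coincidence down through $\mu$. No issues.
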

\begin{proof}
Let $\gamma:\R_{\ge 0}\to \kp(X)$ be a strategy for the cop. We claim that $\mu \gamma$ is a strategy for the cop in $X$. Indeed, if $\rho:\R_{\ge 0}\to X$ is a curve, by Theorem \ref{wofsey} there exists $\widetilde{\rho}:\R_{\ge 0}\to \kp(X)$ such that $\mu \widetilde{\rho}=\rho$. Thus, there exists $t\in \R_{\ge 0}$ such that $\gamma(t)=\widetilde{\rho}(t)$, and so $\mu \gamma (t)=\rho (t)$.
\end{proof}

\begin{obs} \label{obsintervalos}
Unfortunately Theorem \ref{clascw} says that not in many CW-complexes the cop has a strategy. Still, this corollary will be useful for the cases when $\kp(X)$ is homeomorphic to $[0,1]$, as in Figure \ref{fintervalos}.
 
\begin{figure}[h] 
\begin{center}
\includegraphics[scale=0.35]{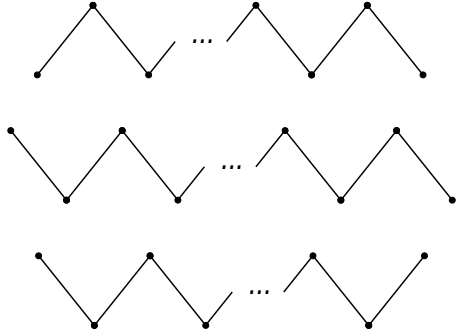}
\caption{Spaces with associated complex homeomorphic to $[0,1]$, in which the cop has a strategy.}\label{fintervalos}
\end{center}
\end{figure}

In fact in these spaces any path $\gamma$ from one leaf in the Hasse diagram to the other leaf is a strong strategy for the cop. A lift $\widetilde{\gamma}:[0,1]\to \kp(X)$ has a coincidence with any other path in $\kp(X)$ by Bolzano's Theorem, so $\gamma=\mu \widetilde{\gamma}$ has a coincidence with any path in $X$.
\end{obs}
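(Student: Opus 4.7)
The plan is to verify the remark's claim literally as sketched: lift $\gamma$ through $\mu$, apply Bolzano's theorem in $\kp(X)\cong[0,1]$ to obtain a coincidence of lifts, and push the coincidence back down via $\mu$. I would identify $\kp(X)$ with $[0,1]$ so that the elements of $X$ are placed at $0=v_0<v_1<\cdots<v_n=1$ and each edge $[v_i,v_{i+1}]$ corresponds to a covering relation between $c_i$ and $c_{i+1}$. The two leaves of the Hasse diagram are then $c_0$ and $c_n$, and the hypothesis is that $\gamma:[0,1]\to X$ satisfies $\gamma(0)=c_0$ and $\gamma(1)=c_n$.

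By Theorem \ref{wofsey}, $\gamma$ admits a continuous lift $\widetilde{\gamma}:[0,1]\to \kp(X)$ with $\mu\widetilde{\gamma}=\gamma$. The one genuine technical step is arranging $\widetilde{\gamma}(0)=0$ and $\widetilde{\gamma}(1)=1$, since Wofsey's theorem only provides \emph{some} lift. If $c_0$ is a maximal element of $X$, its unique covering relation goes down to $c_1$, so the fiber $\mu^{-1}(c_0)$ is just the singleton $\{v_0\}$ and $\widetilde{\gamma}(0)=0$ is automatic. If instead $c_0$ is minimal, then $\{c_0\}=U_{c_0}$ is open in $X$, so $\gamma^{-1}(c_0)$ is an open neighborhood of $0$ containing some $[0,\delta]$, and $\mu^{-1}(c_0)=[v_0,v_1)$. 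I would then replace $\widetilde{\gamma}|_{[0,\delta]}$ by the linear path from $v_0$ to $\widetilde{\gamma}(\delta)$; this stays inside $[v_0,v_1)\subseteq \mu^{-1}(c_0)$, so it still lifts $\gamma$ on $[0,\delta]$, and it glues continuously at $t=\delta$. The mirror-image modification at $t=1$ yields $\widetilde{\gamma}(1)=1$.

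Given any other path $\rho:[0,1]\to X$, apply Theorem \ref{wofsey} again to obtain a lift $\widetilde{\rho}:[0,1]\to \kp(X)\cong[0,1]$. The real-valued continuous function $h(t)=\widetilde{\gamma}(t)-\widetilde{\rho}(t)$ satisfies $h(0)=-\widetilde{\rho}(0)\le 0$ and $h(1)=1-\widetilde{\rho}(1)\ge 0$, so by the intermediate value theorem there exists $t^*\in[0,1]$ with $\widetilde{\gamma}(t^*)=\widetilde{\rho}(t^*)$, and applying $\mu$ yields $\gamma(t^*)=\rho(t^*)$. The main (mild) obstacle is the endpoint adjustment of the lift in the minimal-leaf case; the Bolzano/IVT step is then immediate because the cop's lift spans the entire interval $\kp(X)\cong[0,1]$.
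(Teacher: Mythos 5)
Your proposal is correct and follows exactly the route the paper sketches: lift via Wofsey's theorem, apply the intermediate value theorem in $\kp(X)\cong[0,1]$, and push the coincidence down through $\mu$. The only addition is your explicit endpoint adjustment of $\widetilde{\gamma}$ in the minimal-leaf case, which the paper leaves implicit but which is a legitimate (and welcome) filling-in of the same argument rather than a different approach.
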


There is a relationship between the cop and robber game in finite spaces with another problem, already described in the introduction, played in polyhedra and some subspaces.

There is a thief hidden in an art gallery. When the doors of the gallery close, the thief starts walking around the different rooms. At the same time a watcher begins his route. If at some point both persons are in the same room, then the watcher automatically sees the thief, turns the alarm on, and the thief is caught. The watcher has no idea where the thief is until they are in the same room. If the thief manages to stay in a different room during the whole night, then the doors of the gallery open and he is free. Is there a route the watcher can make every night, so he can find the thief before morning, regardless the thief starting position and the way he moves? Or the thief can always remain unseen during the whole night? We will assume the gallery $S$ is a collection of open cells of a regular CW-complex (with the subspace topology), each cell being a room. So there are rooms of different dimension, and the thief can get caught in any of them. A strategy for the watcher is a path $\gamma: [0,1]\to S$ such that for any other path $\rho:[0,1]\to S$ there exists $t$ with $\gamma(t)$ and $\rho(t)$ being in the same open cell.

We extend the definition of face poset to $S$, so $\x (S)$ is the poset of open cells ordered by the face relation. 

\begin{prop}
Let $S$ be a collection of open cells of a finite regular CW-complex $K$. Then the watcher has a strategy in $S$ if and only if the cop has a strategy in the finite space $\x (S)^{op}$.
\end{prop}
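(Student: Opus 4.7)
The plan is to reduce the equivalence to a path-lifting property for a natural ``cell-assignment'' map. Define $p\colon |S|\to \x(S)^{op}$ by sending each point to the unique open cell of $K$ containing it. For $\sigma\in S$, the minimal open set $U_\sigma$ in $\x(S)^{op}$ consists of the cells of $S$ having $\sigma$ as a face, so $p^{-1}(U_\sigma)$ is the intersection with $|S|$ of the open star of $\sigma$ in $K$, an open subset of $|K|$; hence $p$ is continuous. The watcher's clause ``$\gamma(t)$ and $\rho(t)$ lie in the same open cell'' is just $p\gamma(t)=p\rho(t)$, so both notions of strategy become coincidence conditions: downstairs in $|S|$ for the watcher, upstairs in $\x(S)^{op}$ for the cop. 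By Proposition \ref{lomismo}, I may also assume the cop plays strong strategies on $[0,1]$.

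The heart of the proof is the following \emph{lifting lemma}: every continuous $\rho\colon [0,1]\to \x(S)^{op}$ admits a continuous $\widetilde{\rho}\colon [0,1]\to |S|$ with $p\widetilde{\rho}=\rho$. To prove it, apply Wofsey's theorem (Theorem \ref{wofsey}) to $\rho$; since $[0,1]$ is metric, hence perfectly normal, one obtains $\widetilde{\rho}\colon [0,1]\to \kp(\x(S)^{op})$ with $\mu\widetilde{\rho}=\rho$. The remaining task is to place $|\kp(\x(S)^{op})|$ inside $|S|$ in a way compatible with $p$. Since $K$ is regular, the realization of $\kp(\x(K)^{op})$ is the first barycentric subdivision $K'$, canonically homeomorphic to $|K|$; moreover, a point interior to the simplex spanned by barycenters $b(\sigma_0),\ldots,b(\sigma_n)$ of a chain $\sigma_0<\cdots<\sigma_n$ in $\x(K)$ lies in the open cell $\sigma_n$ of $K$, which is also its image under $\mu$ (the minimum in the opposite order being the maximum of the chain). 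Restricting to chains in $S$, one obtains $|\kp(\x(S)^{op})|\subseteq |S|$ and $\mu$ agrees there with $p$, so $\widetilde{\rho}$ provides the lift.

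Both implications then follow cleanly. If the watcher has a strategy $\gamma\colon [0,1]\to |S|$, then $p\gamma$ is a strong strategy for the cop in $\x(S)^{op}$: given any path $\bar\rho$ in $\x(S)^{op}$, lift it to $\widetilde{\bar\rho}$ in $|S|$ via the lifting lemma, and the watcher's coincidence $t$ yields $p\gamma(t)=p\widetilde{\bar\rho}(t)=\bar\rho(t)$. Conversely, from a strong cop strategy $\bar\gamma\colon [0,1]\to \x(S)^{op}$, lift to $\widetilde{\gamma}\colon [0,1]\to |S|$; for any path $\rho$ in $|S|$ the path $p\rho$ meets $\bar\gamma$ at some $t$, whence $p\widetilde{\gamma}(t)=p\rho(t)$, so $\widetilde{\gamma}(t)$ and $\rho(t)$ are in the same open cell and $\widetilde{\gamma}$ is a watcher strategy.

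The main obstacle will be the geometric verification that $|\kp(\x(S)^{op})|\subseteq |S|$ and that $\mu$ coincides with $p$ there. The delicate point is that $S$ is only a collection of open cells of $K$, so $|S|$ need not be closed or a subcomplex of $|K|$; the inclusion must therefore be checked chain by chain, leaning on the regularity of $K$ and on the standard barycentric-subdivision description of $|K|$ rather than on a subcomplex structure.
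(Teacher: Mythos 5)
Your argument is correct and is essentially the paper's own proof: your map $p$ is the paper's $\varsigma$, your lifting lemma is exactly the paper's composition of Wofsey's theorem with the inclusion $\iota:\kp(\x(S))\to S$ coming from the barycentric subdivision (using the same fact that the open simplex of a chain $e_0<\cdots<e_n$ lies in the open cell $e_n$, so that $\varsigma\iota=\mu$), and both implications are then derived in the same way. The only cosmetic difference is that the paper cites Lundell--Weingram for the chain-by-chain geometric verification you flag as the delicate point.
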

\begin{proof}
Let $\varsigma: S \to \x(S)^{op}$ be the map which maps every point in the open cell $e\subseteq S$ to $e\in \x (S)^{op}$. It is easy to see that $\varsigma$ is continuous (see \cite[Theorem 11.3.2]{Bar}). The polyhedron $\kp (\x(S))$ is a subspace of $S$ via the restriction $\iota: \kp(\x(S))\to S$ of the usual homeomorphism $K'\to K$. If $e_0<e_1<\ldots<e_n$ is a chain in $\x(S)$, the map $\iota$ maps the open simplex $\{e_i\}_{0\le i\le n}$ to the open cell $e_n$ (see \cite[Ch. III, Theorem 1.7]{LW}). In other words, $\varsigma \iota=\mu :\kp (\x(S)^{op})\to \x(S)^{op}$. The rest of the proof is a direct application of the lifting Theorem. 

Assume first that $\gamma:[0,1]\to S$ is a strategy for the watcher. We claim that $\varsigma \gamma$ is a strong strategy for the cop. Let $\rho:[0,1]\to \x(S)^{op}$ be any path. By Theorem \ref{wofsey}, there exists $\widetilde{\rho}:[0,1]\to \kp(\x(S))$ such that $\mu \widetilde{\rho}=\rho$. Then $\iota \widetilde{\rho}$ is a path in $S$ and there exists $t\in [0,1]$ such that $\iota \widetilde{\rho}(t)$ and $\gamma(t)$ are in the same open cell. Thus $\rho(t)=\mu \widetilde{\rho}(t)=\varsigma \iota \widetilde{\rho}(t)=\varsigma \gamma (t)$.  

Conversely, suppose now that the cop has a strategy in $\x(S)^{op}$, so there exists a strong strategy $\gamma:[0,1]\to \x(S)^{op}$ by Proposition \ref{lomismo}. There exists $\widetilde{\gamma}:[0,1]\to \kp(\x(S))$ such that $\mu \widetilde{\gamma}=\gamma$. We show that $\iota \widetilde{\gamma}$ is a strategy for the watcher. Let $\rho:[0,1]\to S$ be any path. Then $\varsigma \rho$ must have a coincidence with $\gamma$, say at $t\in [0,1]$. Thus, $\varsigma \rho(t)=\gamma (t)=\mu \widetilde{\gamma}(t)=\varsigma \iota \widetilde{\gamma}(t)$. This means that $\rho(t)$ and $\iota \widetilde{\gamma}(t)$ lie in the same open cell.
\end{proof}

\begin{ej}
In Figure \ref{figmuseos} we see three art galleries. The 0-dimensional rooms are colored with red, the 1-dimensional with blue and the 2-dimensional with yellow. In the first one there are two rooms of dimension 2, three of dimension 1 and one of dimension 0. Here the watcher has a strategy (see Section \ref{sectionfractal}). In the second gallery there are seven rooms of dimension 2, two of dimension 1 and two of dimension 0. Here the watcher has a strategy as well (see Theorem \ref{main1}). In the third gallery there is one room of dimension 2, one of dimension 1 and two of dimension 0. Here the watcher does not have a strategy (see Example \ref{ye}).
\begin{figure}[h] 
\begin{center}
\includegraphics[scale=0.85]{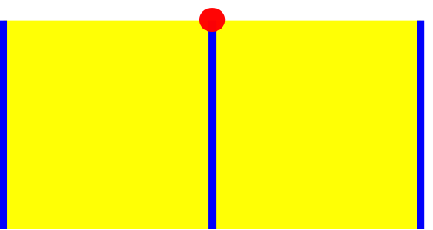}
\qquad
\includegraphics[scale=0.85]{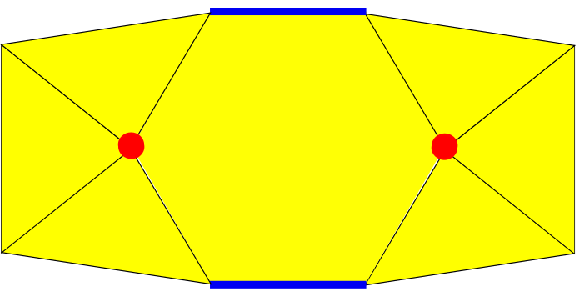}
\qquad
\includegraphics[scale=0.85]{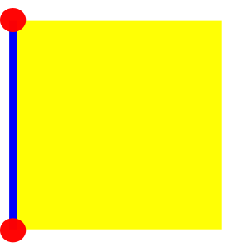}
\caption{Two galleries in which the watcher has a strategy and another in which the thief has a strategy.}\label{figmuseos}
\end{center}
\end{figure}
\end{ej}

%
%

\section{The subspace of extrema}

Let $X$ be a finite $T_0$ space. We denote by $\E (X)$ the subspace of extrema of $X$, formed by its maximal and its minimal points. The aim of this section is to prove that if the cop has a strategy in $X$, so does he in $\E (X)$ (Corollary \ref{coroe}). An alternative proof is discussed in Remark \ref{kevin}. In general $\E (X)$ is not a retract of $X$. However we have the following

\begin{prop} \label{prope}
Let $X$ be a finite $T_0$ space and let $\gamma: \R_{\ge 0} \to X$ be a curve. Then there exists a curve $\E(\gamma): \R_{\ge 0} \to \E(X)$ such that for every $t\in \R_{\ge 0}$, $\gamma(t)\in \E(X)$ implies $\E(\gamma)(t)=\gamma(t)$.
\end{prop}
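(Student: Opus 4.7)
My plan is to build $\E(\gamma)$ by modifying $\gamma$ only where it takes non-extremum values, using a $\gamma$-admissible subdivision to localize the construction. Let $F=\gamma^{-1}(\E(X))$; on $F$ the value of $\E(\gamma)$ is already prescribed to equal $\gamma$, so the task is to extend continuously across $G=\R_{\ge 0}\setminus F$ into $\E(X)$.

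First, I would apply Theorem \ref{admissible} to the cover $\mathcal{U}=\{U_x : x\in X\}$ of $X$ by minimal open sets, producing a minimal $\gamma$-admissible subdivision $\{I_k\}$ of $\R_{\ge 0}$ with $\gamma(I_k)\subseteq U_{x_k}$. Minimality forces adjacent $x_k$ and $x_{k+1}$ to be incomparable in $X$ (otherwise $I_k\cup I_{k+1}$ would lie in a common $\gamma^{-1}(U_y)$), so at any shared boundary $t^*$ one has $\gamma(t^*)\le x_k$ and $\gamma(t^*)\le x_{k+1}$; in particular $\gamma(t^*)$ is never a maximal element. For each $k$ I would fix some maximal $\hat m_k\ge x_k$, and as a first attempt declare $\E(\gamma)(t)=\gamma(t)$ on $F\cap I_k$ and $\E(\gamma)(t)=\hat m_k$ on $G\cap\mathrm{int}(I_k)$. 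Continuity within each interior is then automatic: at $t$ with $\gamma(t)=n\in N$ the openness of $\{n\}$ forces $\gamma$, and therefore $\E(\gamma)$, to be locally constant equal to $n$, and otherwise every nearby value of $\E(\gamma)$ lies in $U_{\hat m_k}^{\E(X)}=\{\hat m_k\}\cup\{n\in N:n\le\hat m_k\}$, which is open in $\E(X)$.

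The principal obstacle is continuity at boundary points $t^*$ where $\gamma(t^*)$ is a middle element, since $\hat m_k$ and $\hat m_{k+1}$ are typically not equal and are incomparable as maxima. My plan is to insert around each such $t^*$ a valley window $W=(t^*-\varepsilon,t^*+\varepsilon)$ chosen so that $\gamma(W)\subseteq U_{\gamma(t^*)}$, and on $W$ to redefine $\E(\gamma)$ to take a single minimum $n_*\le\gamma(t^*)$ on the middle part (while still following $\gamma$ on $F$). Because $n_*\le\gamma(t^*)\le\hat m_k,\hat m_{k+1}$, the transitions at the endpoints of $W$ (where $\E(\gamma)$ jumps from $\hat m_k$ to $n_*$ and later from $n_*$ to $\hat m_{k+1}$) are continuous through the open sets $U_{\hat m_k}^{\E(X)}$ and $U_{\hat m_{k+1}}^{\E(X)}$, both of which contain $n_*$.

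The hardest part, which I expect to be the real technical obstacle, is the interior of $W$ when $\gamma$ visits several distinct minima $n'\ne n_*$ within $W$: the naive rule $\E(\gamma)\equiv n_*$ on middle parts is then discontinuous at middle-to-$n'$ transitions, because $U_{n_*}^{\E(X)}=\{n_*\}$. I expect this to be resolved by refining the valley — subdividing $W$ further according to which minima $\gamma$ actually reaches and adjusting the value of $\E(\gamma)$ on each middle subpiece to be a minimum that $\gamma$ visits in its immediate neighborhood, defaulting to the ``bridging'' max $\hat m_k$ or $\hat m_{k+1}$ on sub-intervals adjacent to the $W$-boundary. The fact that all values $\gamma$ takes on $W$ lie in $U_{\gamma(t^*)}$, combined with the finite-$T_0$ structure of $\E(X)$, should make this iterated patching terminate in finitely many steps and yield a globally continuous $\E(\gamma):\R_{\ge 0}\to\E(X)$ satisfying $\E(\gamma)(t)=\gamma(t)$ on $F$.
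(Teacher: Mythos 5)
Your skeleton (admissible subdivision for the cover by minimal open sets, incomparability of adjacent $x_k$'s, the constant maximal value $\hat m_k$ on $G\cap\mathrm{int}(I_k)$, and a ``valley'' at each boundary point where $\gamma(t^*)$ is a middle element) is sound and is genuinely different from the paper's argument, which instead proceeds by induction on $\#(X\smallsetminus\E(X))$, removing one non-extremal point $a$ at a time and assigning values on the components of $\gamma^{-1}(a)$ via a system of ``margins'' and pairwise disjoint open sets $G_x$ indexed by the maximal points above $a$. But your proof has a genuine gap at exactly the place you flag as the hardest: the interior of the valley $W$ when $\gamma$ visits several distinct minima there. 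Your proposed repair --- assign to each middle subpiece ``a minimum that $\gamma$ visits in its immediate neighborhood'' --- cannot work at a point $s\in G\cap W$ in whose every neighborhood $\gamma$ attains two distinct minimal values $n'\neq n''$: continuity of a map into $\E(X)$ at a point with minimal value $n'$ forces the map to be \emph{constantly} $n'$ near $s$ (since $U_{n'}^{\E(X)}=\{n'\}$), which is incompatible with the prescribed value $n''$ on the nearby $F$-component. At such points $\E(\gamma)(s)$ must be a \emph{maximal} element dominating all nearby minima, and the set of such points can be a Cantor-like closed set (take $\gamma\equiv c$ on a Cantor set and let $\gamma$ dip to $n'$ or $n''$ on the complementary intervals, with both types dense), so the claim that ``iterated patching terminates in finitely many steps'' is not justified and, read literally, false.

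The missing idea is a \emph{global consistency mechanism} for choosing these bridging maximal elements. In the paper this is the content of the margins/$G_x$ construction (using that the set of maximal points above $a$ is discrete and closed, so the margins form a discrete closed set and the $G_x$ are well-defined disjoint open sets with $\gamma^{-1}(x)\subseteq G_x$). In your framework the difficulty can actually be dissolved more cheaply, but you do not make the needed observation: since $\gamma(W)\subseteq U_{\gamma(t^*)}$, \emph{every} minimal value visited in $W$ is $\le\gamma(t^*)\le x_k,x_{k+1}$, hence $\le\hat m_k$ and $\le\hat m_{k+1}$; so one should keep the bridge value equal to $\hat m_k$ on all of $G\cap W$ to the left of a single switching locus and equal to $\hat m_{k+1}$ to the right of it, placing the switch either inside one connected component of the open set $F\cap W$ (where $\gamma$ is a single constant minimum) or, if $F\cap W=\emptyset$, inside an artificial descent $\hat m_k\searrow n\nearrow\hat m_{k+1}$ on a subinterval free of $F$-constraints. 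No descent to a minimum ``on the middle part'' of $G\cap W$ is needed, and no iteration occurs. As written, your argument establishes continuity on the interval interiors and at the $W$-boundaries but leaves the interior of the valley --- which is where all the real work of the proposition lives --- unproved.
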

\begin{proof}
The proof will be by induction in the cardinality of $X\smallsetminus \E(X)$. Assume $a\in X$ is a point which is not maximal nor minimal and it is covered only by maximal points. We are going to define a curve $\widetilde{\gamma}:\R_ {\ge 0} \to X\smallsetminus \{a\}$ which coincides with $\gamma$ at every $t\in \R_{\ge 0}$ such that $\gamma(t)\neq a$. Then the proof concludes by the induction assumption. In order to define $\widetilde{\gamma}$, we consider the connected components of $\gamma^{-1}(a)$. They are intervals of type O: $(t,t')$ (with $t'\in \R_{>0}\cup \{\infty\}$), and intervals of type not O: $[t,t']$ (possibly just one point), $[t,t')$ ($t'\in \R_{>0}\cup \{\infty\})$, $(t,t']$. The new map $\widetilde{\gamma}$ will be constant in each of these intervals. 

Let $\hat{F}_a$ be the set of (maximal) elements in $X$ which are greater than $a$. Let $b$ be a minimal point of $X$ which is smaller than $a$. In each interval of type O, $\widetilde{\gamma}$ is defined to be constant with value $b$. The definition of $\widetilde{\gamma}$ in the intervals of type not O will be given later.


Let $t<t'\in \R_{\ge 0}$ be such that $\gamma(t)=x$ and $\gamma(t')=y$ are two different elements in $\hat{F}_a$ and that $(t,t')\cap \gamma^{-1}(\hat{F}_a)=\emptyset$. Then $[t,t']$ is called a change-interval and we will choose a point $t''$ in $(t,t')$ as follows. If $\gamma\restr_{(t,t')}$ is not the constant map to $a$, we take $t''$ so that $\gamma(t'')\neq a$. Otherwise we take $t''$ to be any point in $(t,t')$. Note that in this second case $(t,t')$ is an interval of type O. The point $t''$ is called a right margin for $x$ and a left margin for $y$ (see Figure \ref{fige}). Note that if $t_0<t_1<t_2$ are margins (left and right), then there exists $t\in (t_0,t_1)$ and $t'\in (t_1,t_2)$ with $\gamma(t),\gamma(t')$ different elements of $\hat{F}_a$. Since $\hat{F}_a$ is discrete and a closed subspace of $X$, the set of margins is discrete and closed.

If $\gamma(t)=x\in \hat{F}_a$ for some $t\in \R_{\ge 0}$ then either there is no margin smaller than $t$ or the greatest margin smaller than $t$ is a left margin for $x$. Symmetrically, either there is no margin greater than $t$ or the smallest margin greater than $t$ is a right margin for $x$. 
Suppose first that the image of the curve $\gamma$ contains at least two different elements of $\hat{F}_a$. Then for each $x\in \hat{F}_a$ let $G_x$ be the union of the following open intervals: 1. All the intervals $(t_0,t_1)$ such that $t_0$ is a left margin for $x$, $t_1$ is a right margin for $x$, and there is no margin in the interval. 2. If the smallest margin $t$ in $\R_{\ge 0}$ is a right margin for $x$, then we also add the interval $[0,t)$ to $G_x$. 3. If there is a greatest margin $t'$ in $\R_{\ge 0}$, which is a left margin for $x$, then we add $(t',\infty)$ to $G_x$. 

If there is only one element $x\in \hat{F}_a$ in the image of $\gamma$, then there are no margins and we define $G_x=\R_{\ge 0}$ while $G_y=\emptyset$ for all $x\neq y\in \hat{F}_a$. If $\textrm{Im}(\gamma)\cap \hat{F}_a=\emptyset$, we choose an element $x\in \hat{F}_a$ and set $G_x=\R_{\ge 0}$, $G_y=\emptyset$ for all $y\neq x$.

Note that the open sets $G_x$ for $x\in \hat{F}_a$, are pairwise disjoint and that $\gamma^{-1}(x)\subseteq G_x$ for every $x\in \hat{F}_a$. Also, each interval of type not O is contained in a $G_x$. Indeed, none of these intervals contains a margin, the intervals are connected, and the $G_x$ cover all of $\R_{\ge 0}$ with exception of the margins.

We are now ready to define $\widetilde{\gamma}$ in the intervals of type not O. If $I$ is an interval of type not O, then it is contained in $G_x$ for a unique $x\in \hat{F}_a$. We define $\widetilde{\gamma}\restr_{I}$ to be the constant map to $x$.

\begin{figure}[h] 
\begin{center}
\includegraphics[scale=0.65]{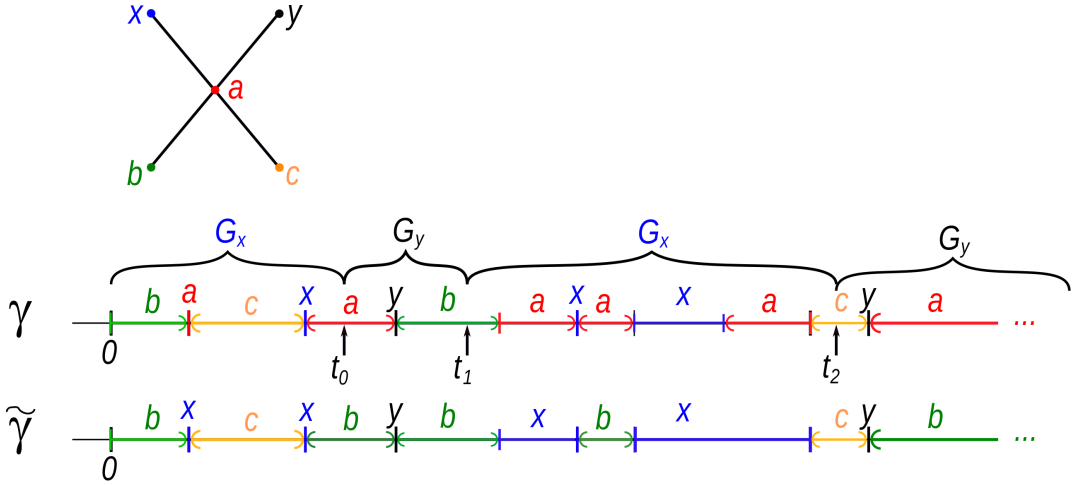}
\caption{A space $X$ of five points, a curve $\gamma$ in $X$, the margins $t_0,t_1,t_2$, the sets $G_x,G_y$, and the curve $\widetilde{\gamma}$ constructed in the proof of Proposition \ref{prope}.}\label{fige}
\end{center}
\end{figure}

We claim that the function $\widetilde{\gamma}:\R_{\ge 0}\to X\smallsetminus \{a\}\subseteq X$ is continuous. Let $(t_n)_{n\in \N}$ be a sequence in $\R_{\ge 0}$ which converges to some $t$. We want to prove that $\widetilde{\gamma}(t_n)\to \widetilde{\gamma}(t)$.

Case 1i: $\gamma(t)\neq a$ and eventually $\gamma(t_n)\neq a$ (that is for some $n_0\in \N$ this happens for every $n\ge n_0$). In this case $\widetilde{\gamma}(t)=\gamma(t)$ and eventually $\widetilde{\gamma}(t_n)=\gamma(t_n)$, so by the continuity of $\gamma$, $\widetilde{\gamma}(t_n)\to \widetilde{\gamma}(t)$.

Case 1ii: $\gamma(t)\neq a$ and $\gamma(t_n)=a$ for infinitely many values of $n$. By assumption $a\to \gamma(t)$, so $\gamma(t)\ge a$, and then $\gamma(t)=x$ for some $x\in \hat{F}_a$. Then $t\in G_x$ and since $G_x$ is open, $t_n$ lies eventually in $G_x$. Therefore eventually $\widetilde{\gamma}(t_n)$ is equal to $x$ (if $t_n$ is in an interval of type not O) or smaller than or equal to $\gamma(t_n)$ (if $t_n$ is in an interval of type O or if $\gamma(t_n)\neq a$). Therefore $\widetilde{\gamma}(t_n)\to x=\widetilde{\gamma}(t)$.

Case 2: $\gamma(t)=a$. If $t$ is in an interval of type O, then $t_n$ lies in the same interval eventually, so $\widetilde{\gamma}(t_n)=\widetilde{\gamma}(t)$ eventually and the convergence is trivial. Otherwise, $t$ lies in an interval of type not O. Thus, there exists $x\in \hat{F}_a$ such that $t\in G_x$ and $\widetilde{\gamma}(t)=x>a=\gamma(t)$. Just as in Case 1ii, $t_n$ lies eventually in $G_x$, so eventually $\widetilde{\gamma}(t_n)$ is equal to $x$ or smaller than or equal to $\gamma(t_n)$. Thus $\widetilde{\gamma}(t_n)\to \widetilde{\gamma}(t)$.

\end{proof}

Of course, the curve $\E(\gamma)$ is not uniquely determined by $\gamma$ in general. 

\begin{coro} \label{coroe}
Let $X$ be a finite $T_0$ space. If the robber has a strategy in $\E(X)$, then so does he in $X$.
\end{coro}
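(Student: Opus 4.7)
The plan is to prove the contrapositive form directly using Proposition \ref{prope}. That is, given a curve $\gamma:\R_{\ge 0}\to X$ (the would-be cop strategy in $X$), I will exhibit a curve $\rho$ in $X$ with no coincidence with $\gamma$, by pushing the game down to $\E(X)$, beating $\gamma$ there, and lifting the robber's answer back along the inclusion.

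More concretely, given any curve $\gamma:\R_{\ge 0}\to X$, apply Proposition \ref{prope} to obtain a curve $\E(\gamma):\R_{\ge 0}\to \E(X)$ satisfying $\E(\gamma)(t)=\gamma(t)$ whenever $\gamma(t)\in \E(X)$. By hypothesis the robber has a strategy in $\E(X)$, so there is a curve $\rho':\R_{\ge 0}\to \E(X)$ such that $\rho'(t)\neq \E(\gamma)(t)$ for every $t\in \R_{\ge 0}$. Composing with the inclusion $i:\E(X)\hookrightarrow X$ (which is continuous), $\rho:=i\rho':\R_{\ge 0}\to X$ is a curve in $X$.

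To finish, I need to check that $\rho$ has no coincidence with $\gamma$. Suppose $\gamma(t)=\rho(t)$ for some $t$. Since $\rho(t)=\rho'(t)\in \E(X)$, we have $\gamma(t)\in \E(X)$, and hence by the defining property of $\E(\gamma)$ provided by Proposition \ref{prope}, $\E(\gamma)(t)=\gamma(t)=\rho'(t)$, contradicting the choice of $\rho'$. Therefore $\gamma$ fails to be a strategy for the cop in $X$, and since $\gamma$ was arbitrary the robber has a strategy in $X$.

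I do not foresee any real obstacle: the substance of the corollary is already concentrated in Proposition \ref{prope}, and the argument is a routine transfer using the inclusion $\E(X)\hookrightarrow X$ together with the compatibility condition $\E(\gamma)(t)=\gamma(t)$ on points already in $\E(X)$. The only subtlety worth emphasizing is that the compatibility need only hold on $\gamma^{-1}(\E(X))$, which is exactly the set of times where a coincidence of $\gamma$ with an $\E(X)$-valued curve could occur, so it is precisely the right amount of information to rule out such coincidences.
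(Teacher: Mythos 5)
Your argument is correct and is essentially the paper's own proof: both apply Proposition \ref{prope} to an arbitrary curve $\gamma$ in $X$, use the robber's strategy in $\E(X)$ against $\E(\gamma)$, and observe that the resulting $\E(X)$-valued curve avoids $\gamma$ because any coincidence would have to occur at a time when $\gamma$ lands in $\E(X)$, where $\gamma$ and $\E(\gamma)$ agree. You merely spell out this last verification in more detail than the paper does.
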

\begin{proof}
Let $\gamma:\R_{\ge 0}\to X$ be a curve in $X$. By Proposition \ref{prope} there is a curve $\E(\gamma):\R_{\ge 0}\to \E(X)$ which coincides with $\gamma$ in $\gamma^{-1}(\E(X))$. By hypothesis there exists a curve $\rho:\R_{\ge 0}\to \E(X)\subseteq X$ having no coincidences with $\E(\gamma)$. Then $\rho$ is a curve in $X$ which has no coincidences with $\gamma$.
\end{proof}

\section{Strategies and obstructions to their existence}

Motivated by Corollary \ref{coroe} we study those finite $T_0$ spaces of height $1$ (i.e. the longest chain has two elements) in which the robber has a strategy. We begin with examples which turn out to be crucial in the classification.

\begin{ej} \label{ejnhc}
Let $m\in \N$ and let $X$ be the non-Hausdorff cone over the discrete space of $m$ points. In other words, it is the space $X$ of Figure \ref{fignhc}. We claim that the cop has a (strong) strategy in $X$. Let $\gamma:[0,1] \to X$ be the path defined by $\gamma(0)=a$, $\gamma(\frac{1}{n})=a$ for every $n\in \N$, and in the interval $(\frac{1}{n+1}, \frac{1}{n})$, $\gamma$ is constant with value $c_{\overline{n}}$ for every $n\in \N$, where $\overline{n}$ denotes class modulo $m$.

\begin{figure}[h] 
\begin{center}
\includegraphics[scale=0.55]{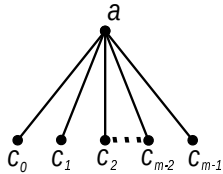}
\caption{The space in Example \ref{ejnhc}.}\label{fignhc}
\end{center}
\end{figure}

It is easy to see that $\gamma$ is indeed continuous. Let $\rho:[0,1] \to X$ be a path. If $\rho(0)=a$, then $\rho(0)=\gamma(0)$. Otherwise $\rho(0)=c_k$ for some $0\le k\le m-1$. Since $\{c_k\}$ is open in $X$, $\rho^{-1}(c_k)$ is an open neighborhood of $0$ and then it intersects an interval of the form $(\frac{1}{n+1}, \frac{1}{n})$ for some $n\in \N$ with $\overline{n}=k$. Thus $\rho$ and $\gamma$ also have a coincidence point in this case. This proves that $\gamma$ is a strong strategy for the cop. Note that if $U\subseteq [0,1]$ is any neighborhood of $0$ and $\rho:U\to X$ is a map, then $\rho$ has a coincidence with $\gamma\restr_U$. This property will be used later and it will be generalized in Theorem \ref{teomax}.
\end{ej}


\begin{ej} \label{escorpion21}
Let $X=S_{2,1}$ be the space depicted in Figure \ref{fescorpion21}. We claim that the robber has a strategy in $X$.

\begin{figure}[h] 
\begin{center}
\includegraphics[scale=0.55]{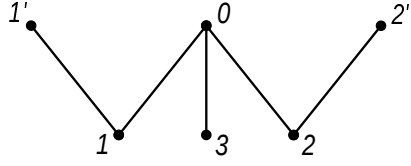}
\caption{The scorpion $S_{2,1}$.}\label{fescorpion21}
\end{center}
\end{figure}

Let $\gamma: [0,1]\to X$ be a path. By Proposition \ref{lomismo} it suffices to show that there is a path $\rho:[0,1]\to X$ which has no coincidence with $\gamma$ (alternativelly we can work with curves $\R_{\ge 0}\to X$ and use Theorem \ref{admissible}). Consider the open cover $\mathcal{U}=\{U_0, U_{1'},U_{2'}\}$. There exists a minimal $\gamma$-admissible subdivision of $[0,1]$. As always we choose for each interval in the subdivision an element of $\mathcal{U}$ whose preimage contains the interval. If $[t_0,t_1]$ and $[t_1,t_2]$ are adjacent intervals of the subdivision, then by minimality and the fact that $U_{1'}\cap U_{2'}=\emptyset$, $U_0$ must have been chosen for one and only one of the intervals. 

In the intervals of the subdivision for which $U_0$ was not chosen, we define $\rho$ to be the constant map to $3$. Since $3\notin U_{i'}$ for $i=1,2$, $\rho$ has no coincidences with $\gamma$ in those intervals.

Let $[t_1,t_2]$ be an interval of the subdivision for which $U_0$ has been chosen. Suppose $[t_0,t_1]$ and $[t_2,t_3]$ are intervals of the subdivision adjacent to the first one. Let $U_{i'}$ be the member of $\mathcal{U}$ chosen for the first interval and $U_{j'}$ for the second. Since $U_{i'}\cap U_0=\{i\}$, $\gamma(t_1)=i$ and similarly $\gamma(t_2)=j$. We will define $\rho\restr_{[t_1,t_2]}:[t_1,t_2]\to X$. If $3\notin \gamma([t_1,t_2])$, we define $\rho\restr_{[t_1,t_2]}$ to be the constant map to $3$. Otherwise, since $\{3\}\in X$ is open, there exist $t_1<s_1<s_2<t_2$ such that $\gamma((s_1,s_2))=\{3\}$. Also, since $\{i\}$ and $\{j\}$ are open, there exist $s_0>t_1$ and $s_3<t_2$ such that $\gamma([t_1,s_0))=\{i\}$ and $\gamma((s_3,t_2])=\{j\}$. Note that $s_0<s_1$ and $s_2<s_3$ (see Figure \ref{figdoscolas}).

\begin{figure}[h] 
\begin{center}
\includegraphics[scale=0.55]{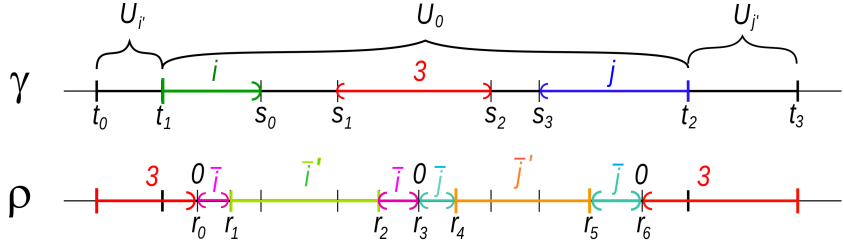}
\caption{The robber has a strategy in $S_{2,1}$.}\label{figdoscolas}
\end{center}
\end{figure}

 Let $r_0,r_1,\ldots, r_6\in [0,1]$ be such that $t_1<r_0<r_1<s_0<s_1<r_2<r_3<r_4<s_2<s_3<r_5<r_6<t_2$. Let $\overline{i}$ be the element of $\{1,2\}$ different from $i$, and define $\overline{j}$ similarly. Define $\rho$ to be $3$ in $[t_1,r_0)\cup(r_6,t_2]$, $0$ in $\{r_0,r_3, r_6\}$, $\overline{i}$ in $(r_0,r_1)\cup (r_2,r_3)$, $\overline{i}'$ in $[r_1,r_2]$, $\overline{j}$ in $(r_3,r_4)\cup (r_5,r_6)$ and $\overline{j}'$ in $[r_4,r_5]$. It is clear then that $\rho\restr_{[t_1,t_2]}$ is continuous and has no coincidence with $\gamma\restr_{[t_1,t_2]}$. There are two cases that we have not contemplated yet. When the interval for which $U_0$ has been chosen is the first or when it is the last in the subdivision. In these cases the definition of $\rho$ is simpler and left to the reader. The path $\rho:[0,1]\to X$ is well-defined, continuous and has no coincidence with $\gamma$.
\end{ej}


\begin{ej} \label{ejnhco}
Let $X$ be the opposite of the non-Hausdorff cone of the discrete space in $m+1$ points (see Figure \ref{fignhco}). We claim that the cop has a (strong) strategy in $X$. This is very similar to Example \ref{ejnocompacto}. We define $\gamma:[0,1]\to X$ by $\gamma(\frac{k}{m})=b_k$ for $0\le k\le m$, and the value of $\gamma$ is $a$ in all the other points. Let $\rho:[0,1]\to X$ be another path. If $a$ is in the image of $\rho$, then since $\{a\}\subseteq X$ is open and $\gamma^{-1}(a)$ is dense in $[0,1]$, $\rho$ and $\gamma$ have a coincidence point. Otherwise, $\rho$ is constant, and since $\gamma$ is surjective, there is also a coincidence. Note that $\gamma^{-1}(b_k)$ consists of just one point.

\begin{figure}[h] 
\begin{center}
\includegraphics[scale=0.55]{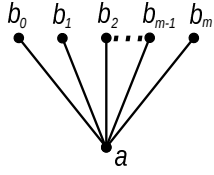}
\caption{The space $X$ in Example \ref{ejnhco}.}\label{fignhco}
\end{center}
\end{figure}
\end{ej}

\begin{ej} \label{escorpion30op}
Let $X=S_{3,0}^{op}$ be the space in Figure \ref{figescorpion30o}. Then the robber has a strategy in $X$.
\begin{figure}[h] 
\begin{center}
\includegraphics[scale=0.55]{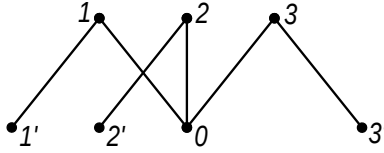}
\caption{The scorpion $S_{3,0}^{op}$.}\label{figescorpion30o}
\end{center}
\end{figure}

Let $\mathcal{U}=\{U_1,U_2,U_3\}$ and let $\gamma:\R_{\ge 0} \to X$ be a curve (alternatively we could work with a path). There is a minimal $\gamma$-admissible subdivision of $\R_{\ge 0}$, and for each interval $I$ we choose a member of $\mathcal{U}$ containing $\gamma(I)$. An interval $I$ of the subdivision is called special if $\gamma(I)\cap \{1',2',3'\}\neq \emptyset$. If $I=[t_0,t_1]$ (or $I=[t_0,\infty)$) is a special interval and $i'\in \gamma(I)$ for some $i=1,2,3$, then there is an open interval $(s_0,s_1)$ contained in $I\cap \gamma^{-1}(i')$. We choose one such interval for each special interval. 

Suppose now $[t_0,t_1]$ and $[t_2,t_3]$ are two special intervals with $t_1<t_2$. Moreover, suppose there is no special interval contained in $[t_1,t_2]$. Let $(s_0,s_1)\subseteq \gamma^{-1}(i')$ be the interval chosen inside $[t_0,t_1]$. Similarly $(s_2,s_3)\subseteq \gamma^{-1}(j')$ is the interval chosen for $[t_2,t_3]$. Here $i,j\in \{1,2,3\}$ could be equal or not. Let $k\in \{1,2,3\}$ be different from $i,j$. We define $\rho\restr_{[s_1,s_2]}$ to be the constant function $k'$. Note that $\rho\restr_{[s_1,s_2]}$ has no coincidence with $\gamma\restr_{[s_1,s_2]}$ since there are no special intervals in $[t_1,t_2]$. We do this for every pair of consecutive special intervals as above. 

Let $[t_0,t_1]$ be a special interval. Suppose $(s_0,s_1)$ is the interval chosen inside $[t_0,t_1]\cap \gamma^{-1}(i')$, and that $\rho(s_0)=k', \rho(s_1)=l'$ have already been defined for $k,l\neq i$. We define $\rho\restr_{[s_0,s_1]}$ to be a path from $k'$ to $l'$ not touching $i'$ (in fact there is a path going through $k',k,0,l,l'$). Thus, $\rho\restr_{[s_0,s_1]}$ and $\gamma\restr_{[s_0,s_1]}$ are coincidence-free. This defines a curve $\rho$ avoiding $\gamma$. We only have to pay attention to the first special interval and all the intervals at its left, and the last special interval if any. Also, there could be no special intervals at all. All these cases are simple and details are left to the reader.      
\end{ej}

\section{The classification in height 1 and an open question in the general case} \label{seccion1}

If $K$ is a compact polyhedron with nontrivial $b_1(K)=H_1(K;\Q)$, then by a result of Borsuk, $S^1$ is a retract, and in particular $K$ lacks the fixed point property. For finite spaces this is not true: a finite space $X$ with the fixed point property may have $b_1(X)\neq 0$ \cite{Bar3}. For finite spaces of height 1, though, an analogue of Borsuk's result holds.

\begin{prop} \label{ciclo}
Let $X$ be a finite $T_0$ space of height 1 which contains a cycle $x_0<x_1>x_2<\ldots <x_{n-1}>x_n=x_0$ ($n\ge 4$, $x_i\neq x_j$ if $0\le i<j\le n-1$). Then the robber has a strategy in $X$.
\end{prop}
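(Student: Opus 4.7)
By Proposition~\ref{lomismo} it is enough, given a path $\gamma:[0,1]\to X$, to construct a path $\rho:[0,1]\to X$ having no coincidence with $\gamma$. Following the style of Examples~\ref{escorpion21} and~\ref{escorpion30op}, I plan to pick a suitable open cover, take a minimal $\gamma$-admissible subdivision of $[0,1]$, and define $\rho$ piecewise, using the cycle $Y=\{x_0,\ldots,x_{n-1}\}$ to give the robber room to maneuver.

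I would assume $X$ is connected (otherwise the robber wins trivially) and use the open cover $\mathcal{U}=\{U_M\mid M\in X\text{ maximal}\}$, which covers $X$ since every minimum lies below some maximum. Fix for each interval $I_k=[t_{k-1},t_k]$ of the subdivision a chosen maximum $M_k$ with $\gamma(I_k)\subseteq U_{M_k}$; by minimality $M_k\ne M_{k+1}$, and at each transition $t_k$ the value $\gamma(t_k)\in U_{M_k}\cap U_{M_{k+1}}$ is a minimum below both. Since $\gamma(t_k)$ is a minimum, $\{\gamma(t_k)\}$ is open in $X$, so $\gamma$ is constant on a neighborhood of~$t_k$.

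For each $k$ I would assign a \emph{target} maximum $M_k^{\ast}$ lying in the cycle, with $M_k^{\ast}\ne M_k$: if $M_k=x_{2i+1}\in Y$ then $M_k^{\ast}=x_{2i+3}$ (rotation by $2$ in the cycle), and if $M_k\notin Y$ then $M_k^{\ast}$ is any maximum of $Y$. Such a choice exists since $n\ge 4$ gives at least two cycle maxima. On a closed sub-interval in the interior of $I_k$ I set $\rho\equiv M_k^{\ast}$: as two distinct maxima of the height-$1$ space $X$ are incomparable, $M_k^{\ast}\notin U_{M_k}$, and hence $\rho\ne\gamma$ on this central portion. Between consecutive central sub-intervals $\rho$ transits along $Y$ from $M_{k-1}^{\ast}$ to $M_k^{\ast}$, stepping through minima shared by neighboring cycle maxima.

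The main obstacle will be arranging the transits so that no coincidence with $\gamma$ arises. When the robber passes through a cycle minimum $x_{2j}$, non-coincidence is secured by scheduling the passage inside the closed complement of the open set $\gamma^{-1}(x_{2j})$; this is possible unless the cop dwells on $x_{2j}$ throughout the entire transit window, in which case the robber re-routes around the other side of the cycle (the hypothesis $n\ge 4$ provides two edge-disjoint arcs in $Y$ between any pair of its maxima) or the target $M_k^{\ast}$ is reassigned. A case analysis, using that at a transition the cop is locally constant at some minimum and that the rotation-by-$2$ assignment automatically places the target maximum away from a shared cycle minimum, shows that these conflicts can always be resolved. The degenerate initial and terminal intervals of the subdivision, and the possibility of the subdivision containing an unbounded/single member, are handled as in the proof of Lemma~\ref{lemacuatroabiertos}.
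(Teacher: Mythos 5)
Your reduction to paths via Proposition~\ref{lomismo}, the choice of the cover by minimal open sets of maximal points, and the observation that at each transition the cop sits on an open (minimal) point are all fine. The proof breaks down exactly where you defer to ``a case analysis'': the transit scheduling. First, the mechanism you propose for crossing a cycle minimum $x_{2j}$ does not work as stated. Since $\{x_{2j}\}$ is open, $\rho^{-1}(x_{2j})$ must be open, so the robber cannot pass through $x_{2j}$ at an instant lying in the closed complement of $\gamma^{-1}(x_{2j})$; he needs an entire open interval disjoint from $\gamma^{-1}(x_{2j})$, and $\gamma^{-1}(x_{2j})$ may be dense in the transit window with its complement having empty interior (this is precisely what the max strategy of Theorem~\ref{teomax} does at a minimum). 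Second, the rerouting claim needs hypotheses you never secure: you must first replace the given cycle by a chordless (minimal-length) one, since otherwise cycle elements can lie in $U_{M_k}$ in uncontrolled ways and both arcs of $Y$ between two of its maxima can be blocked; and when $n=4$ a maximal point $M_k\notin Y$ can lie above \emph{both} cycle minima, so that no cycle minimum is safe during the half-window where $\gamma$ is confined to $U_{M_k}$, forcing a coordinated choice of targets across consecutive intervals. None of this is impossible to arrange --- the safe subintervals are the two halves $(a,t_k)$ and $(t_k,b')$ of the transit window, on which $\gamma$ is confined to $U_{M_k}$ and $U_{M_{k+1}}$ respectively, and for a chordless cycle each of these meets $Y$ in a controlled set --- but that analysis is the whole proof, and it is missing.

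You should also know that there is a much shorter route, which is the one the paper takes. Pass to a cycle of minimal length $C=\{x_0,\ldots,x_{n-1}\}$; by a standard graph-theoretic fact about retractions onto shortest cycles, $C$ is a retract of $X$ via $r(x)=x_{\min(n-1,\,d(x,x_0))}$, where $d$ is the distance in the Hasse diagram with the edge $(x_0,x_{n-1})$ deleted. The crown $C$ admits the fixed-point-free order automorphism $x_i\mapsto x_{i+2}$ (indices mod $n$), hence $X$ admits a fixed-point-free continuous self-map $f$, and $f\gamma$ is a robber curve avoiding any $\gamma$, as already observed in the introduction. This bypasses all subdivision and scheduling arguments.
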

\begin{proof}
We can assume $X$ is connected and the length $n$ of the cycle is minimal. We claim that this cycle $C$ is a retract of $X$. This follows almost directly from a well-known result in graph theory \cite[Proposition 2.51]{HN}. Concretely, let $G$ be the underlying undirected graph of the directed graph obtained from the Hasse diagram of $X$ by removing the edge $(x_0,x_{n-1})$. Define the map $r:X\to X$ by $r(x)=x_i$, where $i$ is the minimum between $n-1$ and the distance between $x$ and $x_0$ in $G$. Then $r$ is a continuous retraction from $X$ to $C$. Since $C$ admits a fixed point free map, then so does $X$. Thus the robber has a strategy in $X$.
\end{proof}

Note that for a connected finite $T_0$ space of height 1, having a cycle is equivalent to not being simply connected and also to having nontrivial first Betti number. 

If a finite $T_0$ space $X$ of height $1$ has no cycle, then any non-empty connected subspace is a retract: just map every point in $X$ (in the component of the subspace if not connected) to the closest point in the subspace in the Hasse diagram. In particular, if $X$ has a subspace $A$ homeomorphic to $S_{2,1}$ or $S_{3,0}^{op}$, then the robber has a strategy. Conversely, if $X$ is of height $1$ with no cycles and $S_{2,1}$ and $S_{3,0}^{op}$ are not subspaces, then the cop has a strategy.

\begin{teo} \label{main1}
Let $X$ be a connected finite $T_0$ space of height $1$. Then the cop has a strategy (equivalently a strong strategy) in $X$ if and only if the following conditions are satisfied simultaneously:

\begin{itemize}
\item $X$ contains no cycle.
\item No subspace of $X$ is homeomorphic to $S_{2,1}$ nor $S_{3,0}^{op}$.
\end{itemize}
\end{teo}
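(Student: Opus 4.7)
\emph{Necessity.} If $X$ contains a cycle, Proposition \ref{ciclo} furnishes a robber strategy. Otherwise $X$ is a bipartite tree and, as observed in the paragraph preceding the theorem, every nonempty connected subspace of $X$ is a retract. Hence if $S_{2,1}$ or $S_{3,0}^{op}$ embeds as a subspace, Remark \ref{obsretract} combined with Examples \ref{escorpion21} and \ref{escorpion30op} yields a robber strategy.

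\emph{Sufficiency, structural part.} Assume $X$ is connected, acyclic, and avoids $S_{2,1}$ and $S_{3,0}^{op}$ as subspaces; by Proposition \ref{lomismo} I only need to produce a strong strategy $\gamma:[0,1]\to X$. The avoidance of $S_{2,1}$ translates to the local condition that every maximal point with at least three minimal neighbors has at most one of those neighbors of degree $\ge 2$ in the Hasse diagram (otherwise two ``tails'' together with a third pendant minimum give $S_{2,1}$); dually, avoidance of $S_{3,0}^{op}$ says every minimal point with at least three maximal neighbors has at most two of those neighbors of degree $\ge 2$. Iteratively stripping leaves from the Hasse diagram produces a ``trunk'' subtree $P$ in which the above conditions force every vertex to have $P$-degree $\le 2$; thus $P$ is a path (possibly empty or a single vertex). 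The same local conditions further restrict what is attached to $P$: interior maximal vertices of $P$ admit no extra neighbors; interior minimal vertices of $P$ admit only attached maximal leaves; each endpoint of $P$ admits arbitrarily many attached leaves of the opposite color, together with at most one longer attached subtree (which, being itself a connected subspace satisfying the hypotheses, has the same kind of structure by recursion).

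\emph{Sufficiency, construction and verification.} I build $\gamma$ by traversing $P$ (absorbing the pendant subtrees at the endpoints) from one end to the other, devoting a subinterval of $[0,1]$ to each trunk vertex and its decorations. On a subinterval assigned to a minimal vertex $v$ of $P$, $\gamma$ dwells at $v$ and visits each attached maximal leaf at an isolated instant, following Example \ref{ejnhco}. On a subinterval assigned to a maximal endpoint of $P$ with attached minimal leaves, $\gamma$ oscillates between the endpoint and its minimal leaves with the oscillation accumulating at the appropriate boundary, following Example \ref{ejnhc}. Continuity of $\gamma$ is transparent from the description of open sets in a finite $T_0$ space. To verify that $\gamma$ is a strong strategy, take any $\rho:[0,1]\to X$ and let $w$ be the vertex of $P$ closest to $\rho(0)$ (reached by the unique path from $\rho(0)$ to $P$ in $X$). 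The subinterval of $\gamma$ centred on $w$ is arranged so that its ``local covering'' (an accumulation of visits to the points of $U_w$ if $w$ is maximal, or a dense set of times at $w$ interspersed with isolated visits to each maximal neighbour if $w$ is minimal) sits at a boundary of $[0,1]$; then the openness of $\rho^{-1}(U_{\rho(0)})$ combined with the local covering of $\gamma$ forces a coincidence by exactly the arguments of Examples \ref{ejnhc} and \ref{ejnhco}. The main obstacle is that one cannot simultaneously place the local covering of every trunk vertex at $t=0$; the resolution is that the robber's path, by continuity, can only move between different branches of $X$ by crossing specific trunk vertices, so the coincidence can instead be located at the instant when $\gamma$ passes the trunk vertex through which $\rho$ crosses.
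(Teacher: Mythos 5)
Your necessity argument and your construction of $\gamma$ follow essentially the paper's route: the paper likewise extracts a path-like trunk (the interior of a fence of maximal length, which coincides with your $P$) with only pendant leaves attached, and glues the strategies of Examples \ref{ejnhc} and \ref{ejnhco} along it. Two minor remarks on your structural part: once $P$ is the set of non-leaf vertices of the Hasse tree, every vertex outside $P$ is a pendant leaf attached directly to $P$, so there are no ``longer attached subtrees'' and no recursion at the endpoints is needed; and the paper simplifies the endpoint cases by adding, if necessary, a pendant minimal point below each maximal endpoint and retracting, so that both ends of the trunk carry the oscillating strategy of Example \ref{ejnhc}.

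The genuine gap is in your verification that $\gamma$ meets every $\rho$. You correctly identify the obstacle --- the accumulating ``local covering'' of each trunk vertex cannot all be placed at $t=0$ --- but your proposed resolution does not address it: knowing that $\rho$ must pass through a particular trunk vertex in order to change branches says nothing about \emph{when} it does so relative to the subinterval in which $\gamma$ services that vertex, and both players visiting $a_{2i}$ at different times is not a coincidence. The paper closes this timing issue with a synchronization argument: it retracts $X$ onto the trunk $Y=\{a_1,\dots,a_{l-1}\}$ via a retraction $r$ collapsing each pendant leaf to its trunk neighbour, observes that the staircase path $\omega$ on $Y$ has a coincidence with $r\rho$ at some time $t$ because $\kp(Y)$ is homeomorphic to $[0,1]$ (Remark \ref{obsintervalos}, via Wofsey's lifting theorem and Bolzano), and then performs a case analysis on which trunk vertex $\omega(t)$ is, using the local structure of $\gamma$ near that same time $t$ (density of $\gamma^{-1}(a_{2i})$ in the corresponding subinterval, openness of $U_{a_1}$ and of the singletons $\{b_{2i,j}\}$, etc.) to upgrade the coincidence in $Y$ to a coincidence in $X$. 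Your proof needs this step, or an equivalent device that produces a \emph{simultaneous} agreement on the trunk; as written, the argument does not go through.
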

\begin{proof}
We have already established the necessity of the conditions. Conversely, suppose the conditions are satisfied.
Let $a_0,a_1,\ldots, a_l$ be a fence of maximal length in $X$. That is, these $l+1$ points are pairwise different, $a_i$ and $a_{i+1}$ are comparable in $X$ for $0\le i \le l-1$ and there is no longer fence in $X$. We can assume $l\ge 2$. Since $X$ contains no cycle, $a_i$ and $a_j$ are not comparable for $|i-j|>1$. Suppose $a_i$ is minimal in $X$. If $i=0$, the unique point of $X$ comparable with $a_i$ is $a_1$, by maximality of the fence. If $i=l$, $a_i$ is only comparable with $a_{i-1}$. If $0<i<l$, let $\{a_{i-1},a_{i+1}, b_{i,1},b_{i,2},\ldots, b_{i,m_i}\}$ be the set of points in $X$ comparable with (=greater than) $a_i$. Since $S_{3,0}^{op}$ is not a subspace of $X$, $X$ contains no cycle, and the length of the fence is maximal, then the points $b_{i,j}$ are only comparable with $a_i$. Conversely, suppose $0\le i\le l$ is such that $a_i$ is maximal. Again, if $i=0$ or $i=l$, $a_i$ is only comparable with one point in $X$. If $2\le i \le l-2$, since $S_{2,1}$ is not a subspace of $X$, the unique points comparable with (=smaller than) $a_i$ in $X$ are $a_{i-1}$ and $a_{i+1}$. Let $\{a_0,a_2, c_1,c_2,\ldots ,c_{m_1}\}$ be the set of points comparable with $a_1$. Let $\{a_{l-2},a_l, d_1,d_2,\ldots, d_{m_{l-1}}\}$ be the set of points which are smaller than $a_{l-1}$. By maximality of the length of the fence, the points $c_j$ are only comparable with $a_1$ and the points $d_j$ only comparable with $a_{l-1}$. Since $X$ is connected, there are no other points in $X$ than the $a_i$, the $b_{i,j}$, the $c_j$ and the $d_j$. 

If $a_0$ is maximal, we can define a new space by adding a point $a_{-1}$ smaller than $a_0$ and incomparable with any other point in $X$. Similarly, if $a_l$ is maximal, we can add a point $a_{l+1}$ smaller than $a_l$ and incomparable with any other point. The new space satisfies the hypothesis of the statement and contains $X$ as a retract. Thus, we can assume $a_0$ and $a_l$ are both minimal (so $l$ is even), and $X$ looks as the diagram in Figure \ref{figgeneral}. The case $l=2$ follows directly from Example \ref{ejnhc}, so we will assume $l\ge 4$.

\begin{figure}[h] 
\begin{center}
\includegraphics[scale=0.55]{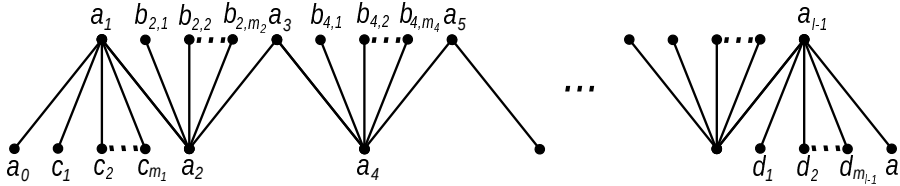}
\caption{The space $X$ in Theorem \ref{main1}.}\label{figgeneral}
\end{center}
\end{figure}

Essentially we glue the strategies in Examples \ref{ejnhc} and \ref{ejnhco} (see Remark \ref{horizontal} for comments in this direction). Let $\gamma: [0,l] \to X$ be the path defined as in Figure \ref{figaltura1}. Namely $\gamma(2i+1)=a_{2i+1}$ for $0\le i\le \frac{l}{2}-1$. The interval $[2i-1,2i+1]$ (for $1\le i \le \frac{l}{2}-1$) is subdivided in $m_{2i}+1$ intervals by considering points $2i-1<t_{2i,1}<t_{2i,2}<\ldots<t_{2i,m_{2i}}<2i+1$. We define $\gamma(t_{2i,j})=b_{2i,j}$, while $\gamma(t)=a_{2i}$ for the remaining points in $(2i-1,2i+1)$. In the interval $[0,1)$, $\gamma$ is defined as follows. The value of $\gamma$ in $t_k=1-\frac{1}{k}$ is $a_1$ for every $k$, while $\gamma$ in $(t_k,t_{k+1})$ is the constant map to $c_{\overline{k}}$, where $\overline{k}$ is the class of $k$ modulo $m_1+2$ and $c_0=a_0$, $c_{m_1+1}=a_2$. In the interval $(l-1,l]$, $\gamma$ is defined with a symmetric idea by taking the value $a_{l-1}$ in $s_k=l-1+\frac{1}{k}$ for every $k$, while $\gamma$ in $(s_{k+1},s_k)$ is constant with value $d_{\overline{k}}$, being $\overline{k}$ the class modulo $m_{l-1}+2$ and $d_0=a_l$, $d_{m_{l-1}+1}=a_{l-2}$. Clearly $\gamma$ is continuous.

\begin{figure}[h] 
\begin{center}
\includegraphics[scale=0.7]{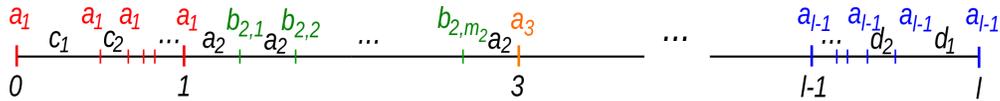}
\caption{The strong strategy $\gamma$.}\label{figaltura1}
\end{center}
\end{figure}

We claim $\gamma$ is a strong strategy for the cop. Let $\rho:[0,l] \to X$ be another path. We want to prove that $\gamma$ and $\rho$ have a coincidence point. Let $Y$ be the subspace of $X$ which consists of the points $a_1,a_2,\ldots, a_{l-1}$ and let $r:X\to Y$ be the retraction that maps $a_0,c_1,c_2,\ldots, c_{m_1}$ to $a_1$, it maps $a_l,d_1,d_2,\ldots, d_{m_{l-1}}$ to $a_{l-1}$, and for each even $2\le i \le l-2$, it maps $b_{i,1},b_{i,2},\ldots, b_{i,m_i}$ to $a_i$.

Let $\omega :[1,l-1]\to Y$ be the map defined by $\omega (2i+1)=a_{2i+1}$ for $0\le i\le \frac{l}{2}-1$ while $\omega$ in $(2i-1,2i+1)$ is constant with value $a_{2i}$ for $1\le i\le \frac{l}{2}-1$. Note that $\omega$ has a coincidence with any other path $[1,l-1]\to Y$ by Remark \ref{obsintervalos}. In particular, there exists $t\in [1,l-1]$ such that $\omega(t)=r\rho\restr_{[1,l-1]}(t)$. 

Case 1: $t=1$. In this case $r\rho(1)=\omega(1)=a_1$. This implies that $\rho(1)\in \{a_1,a_0,c_1,c_2,\ldots,$ $c_{m_1}\} \subseteq U_{a_1}$. Thus, there is a neighborhood $U$ of $1$ in $[0,1]$ such that $\rho (U)\subseteq U_{a_1}$. By Example \ref{ejnhc}, $\rho$ and $\gamma$ have a coincidence in $U$.

Case 2: $t=l-1$ is symmetric to case 1.

Case 3: $t=2i+1$ for some $1\le i\le \frac{l}{2}-2$. In this case $r\rho(2i+1)=\omega(2i+1)=a_{2i+1}$, so $\rho(2i+1)=a_{2i+1}=\gamma(2i+1)$.

Case 4: $t\in (2i-1,2i+1)$ for some $1\le i \le \frac{l}{2}-1$. In this case $r\rho(t)=\omega (t)=a_{2i}$, so $\rho (t)\in \{a_{2i},b_{2i,1},b_{2i,2},\ldots, b_{2i,m_{2i}}\}$. If $\rho (t)=a_{2i}$, since $\{a_{2i}\}$ is open and $\gamma\restr_{[2i-1,2i+1]}^{-1}(a_{2i})$ is dense in $[2i-1,2i+1]$, $\rho$ and $\gamma$ have a coincidence point. Suppose then that $\rho(t)=b_{2i,j}$ for some $1\le j\le m_{2i}$. If $\rho\restr_{(2i-1,2i+1)}$ is constant, then $\rho(t_{2i,j})=b_{2i,j}=\gamma(t_{2i,j})$. If $\rho\restr_{(2i-1,2i+1)}$ is not constant, necessarily $a_{2i}$ is in its image. And by the reasoning at the beginning of this case, $\rho$ and $\gamma$ have a coincidence. 
\end{proof}

\begin{obs} \label{horizontal}
A slightly different proof of Theorem \ref{main1} may be given using a ``horizontal" retraction instead of a ``vertical" one. The proof would be by induction on $l$ and would use the following result, whose proof is an easy exercise:

Let $Z$ be a topological space and let $X,Y$ be two subspaces of $Z$ such that $Z=X\cup Y$. Let $z_0\in X\cap Y$, and suppose there exists a retraction $r_X:Z\to X$ such that $r_X(Y\smallsetminus X)=\{z_0\}$. Suppose $\gamma_X:[0,1]\to X$ and $\gamma_Y:[0,1]\to Y$ are strong strategies for the cop in $X$ and $Y$ respectively and that $\gamma_X^{-1}(z_0)=\{1\}$ while $\gamma_Y(0)=z_0$. Furthermore, suppose one of the following holds:

\noindent i) There exists a retraction $r_Y:Z\to Y$ such that $r_Y(X\smallsetminus Y)=\{z_0\}$ and $\gamma_Y^{-1}(z_0)=\{0\}$, or

\noindent ii) $Y$ is an open subspace of $Z$ and for any neighborhood $U\subseteq [0,1]$ of $0$ and any map $\rho:U\to Y$, $\rho$ and $\gamma_Y\restr_U$ have a coincidence point. 

Then $\gamma=\gamma_X*\gamma_Y:[0,1]\to Z$ is a strong strategy for the cop in $Z$.
\end{obs}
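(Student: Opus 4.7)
Define $\gamma=\gamma_X*\gamma_Y:[0,1]\to Z$ in the usual way, namely $\gamma(t)=\gamma_X(2t)$ for $t\in[0,\tfrac12]$ and $\gamma(t)=\gamma_Y(2t-1)$ for $t\in[\tfrac12,1]$. The hypothesis $\gamma_X^{-1}(z_0)=\{1\}$ forces $\gamma_X(1)=z_0=\gamma_Y(0)$, so $\gamma$ is continuous. By the remark after the definition of strong strategy, the two halves $\gamma|_{[0,1/2]}$ and $\gamma|_{[1/2,1]}$ are strong strategies for the cop in $X$ and $Y$ respectively (on those closed intervals). Given an arbitrary path $\rho:[0,1]\to Z$, the goal is to produce $t\in[0,1]$ with $\gamma(t)=\rho(t)$.

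First, I would run the strategy on the left half by retracting to $X$. The composition $r_X\rho:[0,1]\to X$ is continuous, so applying the strong strategy property of $\gamma|_{[0,1/2]}$ against $(r_X\rho)|_{[0,1/2]}$ yields some $t_1\in[0,\tfrac12]$ with $\gamma(t_1)=r_X\rho(t_1)$. If $\rho(t_1)\in X$ then $r_X\rho(t_1)=\rho(t_1)$ and we are done. Otherwise $\rho(t_1)\in Y\smallsetminus X$, and the assumption $r_X(Y\smallsetminus X)=\{z_0\}$ gives $\gamma(t_1)=z_0$; but then $\gamma_X^{-1}(z_0)=\{1\}$ forces $t_1=\tfrac12$. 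Hence either we already have a coincidence, or $\rho(\tfrac12)\in Y\smallsetminus X$.

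Under hypothesis (i), the same argument is symmetrically applied to the right half using $r_Y$ and the strong strategy $\gamma|_{[1/2,1]}$: it produces $t_2\in[\tfrac12,1]$ with $\gamma(t_2)=r_Y\rho(t_2)$, and either $\rho(t_2)\in Y$ (coincidence) or $\rho(t_2)\in X\smallsetminus Y$, in which case $r_Y\rho(t_2)=z_0$ and $\gamma_Y^{-1}(z_0)=\{0\}$ forces $t_2=\tfrac12$. The second alternative places $\rho(\tfrac12)$ in $X\smallsetminus Y$, contradicting the conclusion of the first half that $\rho(\tfrac12)\in Y\smallsetminus X$. Hence a coincidence must exist.

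Under hypothesis (ii), since $Y$ is open in $Z$ and $\rho(\tfrac12)\in Y$, the set $\rho^{-1}(Y)\cap[\tfrac12,1]$ is an open neighborhood of $\tfrac12$ in $[\tfrac12,1]$; let $W$ be its connected component containing $\tfrac12$, so $W=[\tfrac12,a)$ for some $a\in(\tfrac12,1]$ or $W=[\tfrac12,1]$, and $\rho(W)\subseteq Y$. Reparametrizing $W$ linearly by $s=2t-1$ turns it into a neighborhood $U$ of $0$ in $[0,1]$, and $\gamma|_W$ becomes $\gamma_Y|_U$, while $\rho|_W$ becomes a map $U\to Y$. The assumption in (ii) applied to this map yields a coincidence with $\gamma_Y|_U$, which in turn gives a coincidence of $\rho$ and $\gamma$ inside $W$. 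The main obstacle is purely bookkeeping: tracking which of the two alternative values $\{$point in $X$, point in $Y\smallsetminus X\}$ holds for $\rho(t_i)$ at the retracted coincidences, and exploiting the precise location constraints $\gamma_X^{-1}(z_0)=\{1\}$ and $\gamma_Y^{-1}(z_0)=\{0\}$ (respectively the openness of $Y$) to force the ``bad'' case to yield either a contradiction or a neighborhood of $\tfrac12$ where hypothesis (ii) can be invoked.
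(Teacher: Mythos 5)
Your proof is correct. The paper states this result without proof (it is explicitly left as ``an easy exercise''), so there is nothing to compare against; your argument --- retract $\rho$ onto $X$ via $r_X$, use the strong strategy on $[0,\frac12]$, observe that a non-genuine coincidence forces $t_1=\frac12$ and $\rho(\frac12)\in Y\smallsetminus X$ because $\gamma_X^{-1}(z_0)=\{1\}$, and then either run the symmetric argument with $r_Y$ to get the contradiction $\rho(\frac12)\in (X\smallsetminus Y)\cap(Y\smallsetminus X)=\emptyset$ in case (i), or use openness of $Y$ to produce a neighborhood of $\frac12$ mapped into $Y$ and invoke the local coincidence hypothesis in case (ii) --- is exactly the intended one, and all the details (continuity of $\gamma_X*\gamma_Y$ at $\frac12$, reparametrized strong strategies, and the component of $\rho^{-1}(Y)\cap[\frac12,1]$ being a half-open neighborhood of $\frac12$) check out.
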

Note that the property of having a strategy is not invariant under reversal of the order. Concretely the cop has a strategy in $S_{2,1}^{op}$ while the robber has a strategy in $S_{2,1}$.

Theorem \ref{main1} completes the classification of the finite $T_0$ spaces of height 1 in which the cop has a strategy. On the other hand, by Corollary \ref{coroe} if $X$ is any finite $T_0$ space and the robber has a strategy in $\E(X)$, so does he in $X$. What if the cop has a strategy in $\E (X)$? This does not mean that he has a strategy in $X$ as we will see.


\begin{ej} \label{ye}
The robber has a strategy in the space $X=\mathfrak{Y}$ in Figure \ref{figygriega}.

\begin{figure}[h] 
\begin{center}
\includegraphics[scale=0.55]{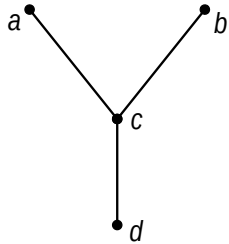}
\caption{The space $\mathfrak{Y}$.}\label{figygriega}
\end{center}
\end{figure}

Let $\gamma: [0,1]\to X$ be a path. Let $\mathcal{U}=\{U_a,U_b\}$. We consider a minimal $\gamma$-admissible subdivision of $[0,1]$. Suppose $[t_0,t_1], [t_1,t_2]$ are adjacent intervals in the subdivision. Without loss of generality we assume $U_a$ has been chosen for $[t_0,t_1]$ (so in particular $\gamma([t_0,t_1])\subseteq U_a$) and $U_b$ has been chosen for $[t_1,t_2]$. Since $U_a\cap U_b=\{c,d\}$ is open, there exist $s_0,s_1\in [0,1]$ with $\frac{t_0+t_1}{2}<s_0<t_1<s_1<\frac{t_1+t_2}{2}$ and $\gamma ((s_0,s_1))\subseteq \{c,d\}$. We will define $\rho\restr_{[\frac{t_0+t_1}{2},\frac{t_1+t_2}{2}]}$. If $d\notin \gamma((s_0,s_1))$, we define $\rho$ to be $d$ in $(s_0,s_1)$, $a$ in $[s_1,\frac{t_1+t_2}{2}]$, $b$ in $[\frac{t_0+t_1}{2},s_0]$. On the other hand, if $d\in \gamma((s_0,s_1))$, since $\{d\}$ is open in $X$, there exist $s_0<r_0<r_1<s_1$ such that $\gamma$ is identically $d$ in $(r_0,r_1)$. Then we define $\rho$ to be $c$ in $(r_0,r_1)$, $a$ in $[r_1,\frac{t_1+t_2}{2}]$, $b$ in $[\frac{t_0+t_1}{2},r_0]$. In any case, $\rho\restr_{[\frac{t_0+t_1}{2},\frac{t_1+t_2}{2}]}$ is continuous. We do this for every pair of adjacent intervals in the subdivision of $[0,1]$. As always, the definition of $\rho$ in the first interval of the subdivision and the last one is simpler and left to the reader. The resulting map $\rho:[0,1]\to X$ is continuous and has no coincidences with $\gamma$. 
\end{ej} 

\begin{coro} \label{coroy}
Let $X$ be a finite $T_0$ space which contains a subspace homeomorphic to $\mathfrak{Y}$, which we identify with $\mathfrak{Y}$. Suppose $\{a,b\}\subseteq \mathfrak{Y}$ has no upper bound in $X$. Then $\mathfrak{Y}$ is a retract of $X$, and in particular the robber has a strategy in $X$. 
\end{coro}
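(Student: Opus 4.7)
The plan is to exhibit a retraction $r: X\to\mathfrak{Y}$; once this is achieved, the robber's strategy in $X$ follows from the one in $\mathfrak{Y}$ provided by Example \ref{ye}. Indeed, given any cop curve $\gamma: \R_{\ge 0}\to X$, the composition $r\gamma$ is a curve in $\mathfrak{Y}$, so there is a curve $\rho: \R_{\ge 0}\to\mathfrak{Y}\subseteq X$ with no coincidence with $r\gamma$; a hypothetical coincidence $\gamma(t)=\rho(t)$ would force $\gamma(t)\in\mathfrak{Y}$ and hence $r\gamma(t)=\gamma(t)=\rho(t)$, contradicting the choice of $\rho$. So $\rho$, viewed as a curve in $X$, avoids $\gamma$.

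To construct $r$, I would use that the no-upper-bound hypothesis makes the principal filters $F_a=\{x\in X : x\ge a\}$ and $F_b=\{x\in X : x\ge b\}$ disjoint. I would set $r\equiv a$ on $F_a$, $r\equiv b$ on $F_b$. On the remaining set $L=X\setminus(F_a\cup F_b)$, which contains $c$ and $d$, I would define $r$ by propagating from $c$ and $d$ along the fence relation of $L$: start with $r(c)=c$, $r(d)=d$, and extend to any other $x\in L$ according to the order relations of $x$ with $c,d$ and with the already-labeled points of $L$.

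Verifying that $r$ is order-preserving reduces to a case analysis on comparable pairs $x\le x'$ in $X$, depending on which of $F_a$, $F_b$, or $L$ each endpoint lies in. The homogeneous cases (both in $F_a$ or both in $F_b$) are immediate, and mixed cases reduce to the pseudocircle inequalities $c,d\le a,b$ holding in $\mathfrak{Y}$. The main obstacle, and the crux of the proof, is the case $x,x'\in L$: one must ensure that the propagation from $c$ and the one from $d$ never clash, so that $r$ is single-valued on $L$ and respects the order there. This is where the no-upper-bound hypothesis for $\{a,b\}$ is essential, since it restricts what kind of chains and fences $L$ can carry.
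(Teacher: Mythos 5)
Your reduction of the robber's strategy to the existence of the retraction is correct (it is the contrapositive of Remark \ref{obsretract}), and your treatment of $F_a$ and $F_b$ agrees with the paper: the no-upper-bound hypothesis makes them disjoint, and you send them to $a$ and $b$ respectively. But the proof is not complete: on $L=X\smallsetminus(F_a\cup F_b)$ you never actually define $r$. ``Propagating from $c$ and $d$ along the fence relation of $L$'' is not a construction, and you yourself flag the single-valuedness and order-preservation of this propagation as the unresolved crux. That is a genuine gap, and the difficulty you anticipate is in fact illusory: no propagation is needed. The paper simply sets $r(x)=d$ if $x\le d$ and $r(x)=c$ for every other $x\in L$ (note that $\{x\in X : x\le d\}\subseteq L$ automatically, since $x\le d$ together with $x\ge a$ or $x\ge b$ would force $a\le d$ or $b\le d$, which is impossible). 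Order-preservation is then a short case analysis: for $x\le y$, if $x\le d$ then $r(x)=d$ is the minimum of $\mathfrak{Y}$; if $x\ge a$ (resp.\ $x\ge b$) then also $y\ge a$ (resp.\ $y\ge b$) and $r(x)=r(y)$; in the remaining case $r(x)=c$ and $y\not\le d$, so $r(y)\in\{a,b,c\}$ and $c\le r(y)$.

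A second, related misstep: you assert that the no-upper-bound hypothesis ``restricts what kind of chains and fences $L$ can carry'' and is essential for well-definedness on $L$. It is not; $L$ may be an arbitrary poset. The hypothesis is used exactly once, to guarantee $F_a\cap F_b=\emptyset$ so that the clauses $r\equiv a$ on $F_a$ and $r\equiv b$ on $F_b$ do not clash. (Also, the inequalities you invoke are those of the Y-shaped poset $\mathfrak{Y}$, namely $d<c<a$ and $d<c<b$, not those of the pseudocircle.)
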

\begin{proof}
Define $r:X\to \mathfrak{Y}$ as follows. Let $x\in X$. If $x\ge a$, $r(x)=a$. If $x\ge b$, $r(x)=b$. If $x\le d$, $r(x)=d$. In any other case, $r(x)=c$. Note that $r$ is well defined by hypothesis. We check that $r$ is continuous. Let $x\le y$ be two points in $X$. If $x\le d$, then $r(x)=d\le r(y)$. If $x\ge a$, then $r(x)=a=r(y)$. Similarly, if $x\ge b$, $r(x)=r(y)$. In any other case $r(x)=c$ and $r(y)\neq d$, so $r(x)\le r(y)$. Since $r$ fixes $\mathfrak{Y}$ pointwise, it is a retraction.
\end{proof}

\begin{obs} \label{kevin}
In Corollary \ref{coroe} we proved that if the cop has a strategy in a finite $T_0$ space $X$, then so does he in $\E (X)$. We used Proposition \ref{prope}. Corollary \ref{coroy} can be used to give an alternative proof. If the cop has a strategy in $X$, then any non-minimal point is smaller than or equal to a unique maximal element. Otherwise, the hypotheses of Corollary \ref{coroy} would be fulfilled. In this case $\E (X)$ is a retract of $X$, with a retraction which maps every non-minimal point to the unique greater maximal point. Thus, the cop has a strategy in $\E (X)$. 

\end{obs}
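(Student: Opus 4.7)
The plan is to carry out the alternative proof of Corollary \ref{coroe} sketched in this remark. Assume the cop has a strategy in a finite $T_0$ space $X$; the goal is to produce a retraction $r:X\to \E(X)$, after which Remark \ref{obsretract} yields a strategy for the cop in $\E(X)$.

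The crux of the argument is the order-combinatorial claim that every non-minimal point $x\in X$ lies below a \emph{unique} maximal element. I would prove this by contradiction. Suppose some non-minimal $x\in X$ satisfies $x\le a$ and $x\le b$ for two distinct maximal elements $a,b$. Then $x$ is strictly less than each of $a,b$ (otherwise $a$ and $b$ would be comparable, contradicting their maximality), and since $x$ is non-minimal we can pick a minimal element $d\in X$ with $d<x$. I would then check that the four-element subspace $A=\{a,b,x,d\}\subseteq X$ is homeomorphic to $\mathfrak{Y}$: the four points are pairwise distinct, and the only relations among them inherited from $X$ are $d<x$, $x<a$, $x<b$, together with their transitive consequences $d<a$, $d<b$ (any other relation is ruled out by maximality of $a,b$, minimality of $d$, or the strict inequality $d<x$). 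Because $a,b$ are distinct maximal elements of $X$, the pair $\{a,b\}$ has no upper bound in $X$, so Corollary \ref{coroy} gives $\mathfrak{Y}$ as a retract of $X$. Combined with Example \ref{ye} and Remark \ref{obsretract}, this provides a strategy for the robber in $X$, a contradiction.

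Granted the uniqueness statement, I would define $r:X\to \E(X)$ by $r(y)=y$ when $y$ is a minimal point of $X$ (including points that are both minimal and maximal), and $r(y)$ equal to the unique maximal element above $y$ otherwise. Continuity of $r$ is equivalent to monotonicity, which I would verify by a short case analysis on $y\le z$: when $z$ is non-minimal, the element $r(z)$ is a maximal above $z$ and hence above $y$, so uniqueness forces $r(y)=r(z)$ if $y$ is non-minimal and $r(y)=y\le r(z)$ if $y$ is minimal; the case of minimal $z$ is trivial since then $y=z$. The map $r$ fixes $\E(X)$ pointwise, so it is a retraction, and Remark \ref{obsretract} finishes the proof.

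The main obstacle is the identification of the subspace $A$ with $\mathfrak{Y}$; one must carefully exclude every possible extra order relation inside $A$ and every possible coincidence among $a,b,x,d$. Once that verification is laid out cleanly, the remaining definition and the continuity check for $r$ are essentially routine.
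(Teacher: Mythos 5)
Your proposal is correct and follows essentially the same route the remark sketches: the four-point subspace $\{a,b,x,d\}$ carries exactly the order of $\mathfrak{Y}$, the pair of distinct maximal elements has no upper bound so Corollary \ref{coroy} yields the contradiction, and the retraction sending each non-minimal point to its unique maximal upper bound (and fixing minimal points) is the one the paper has in mind. Your verification of the induced order on $A$ and of the monotonicity of $r$ supplies precisely the details the remark leaves to the reader.
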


From Corollary \ref{coroe}, Theorem \ref{main1} and Corollary \ref{coroy}, we deduce the following result.

\begin{coro} \label{maincoro}
If $X$ is a finite $T_0$ space in which the cop has a strategy, then 
\begin{itemize}
\item $\E (X)$ is simply-connected,
\item $S_{2,1}$ and $S_{3,0}^{op}$ are not subspaces of $\E(X)$,
\item There is no subspace $A$ of $X$ homeomorphic to $\mathfrak{Y}$ with $\E(A) \subseteq \E(X)$.
\end{itemize}
\end{coro}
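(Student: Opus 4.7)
The plan is to assemble the three assertions from results already established: the passage to extrema of Corollary \ref{coroe}, the height-1 classification of Theorem \ref{main1}, and the $\mathfrak{Y}$-obstruction of Corollary \ref{coroy}. Throughout I assume the cop has a strategy in $X$, so that $X$ is path-connected and has the fixed point property.

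I would handle the first two bullets in one stroke. The contrapositive of Corollary \ref{coroe} says that a cop strategy in $X$ yields a cop strategy in $\E(X)$. Since any strategy forces path-connectedness (as observed in the introduction), $\E(X)$ is path-connected. Moreover $\E(X)$ has height at most $1$, because a chain in $\E(X)$ contains at most one minimal and one maximal element of $X$. Hence Theorem \ref{main1} applies to $\E(X)$ and gives both (a) $\E(X)$ contains no cycle and (b) neither $S_{2,1}$ nor $S_{3,0}^{op}$ embeds in $\E(X)$. The remark following Proposition \ref{ciclo} records that, for a connected finite $T_0$ space of height $1$, absence of a cycle is equivalent to simple-connectedness; this yields the first bullet, and (b) is exactly the second bullet. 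The only delicate point worth spelling out is the degenerate case in which $\E(X)$ has height $0$ (a singleton), which is trivially simply-connected.

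For the third bullet I would argue by contradiction. Suppose $A \subseteq X$ is homeomorphic to $\mathfrak{Y}$, say via the identification $A = \{a,b,c,d\}$ with $a,b$ the two upper points and $c,d$ the two lower points, and assume $\E(A) \subseteq \E(X)$. Because every point of $\mathfrak{Y}$ is an extremum of $\mathfrak{Y}$, we have $\E(A) = A$, so in particular $a$ and $b$ are maximal in $X$. Since $a \neq b$ are both maximal, no element of $X$ can be $\ge a$ and $\ge b$ simultaneously, so $\{a,b\}$ has no upper bound in $X$. Corollary \ref{coroy} then produces a retraction $X \to \mathfrak{Y}$ and a strategy for the robber in $X$, contradicting the hypothesis.

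I do not expect any serious obstacle: the content of the corollary is precisely that the three previously proved obstructions can be combined. The only substantive check is ensuring that $\E(X)$ satisfies the connectedness hypothesis of Theorem \ref{main1}, which, as noted, follows from the general fact that a space admitting a cop strategy is path-connected.
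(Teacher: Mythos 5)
Your assembly of Corollary \ref{coroe}, Theorem \ref{main1} and Corollary \ref{coroy} is exactly how the paper deduces this corollary (it offers no further argument beyond citing those three results), and your treatment of the first two bullets --- passing to $\E(X)$ via the contrapositive of Corollary \ref{coroe}, checking that $\E(X)$ is connected of height at most $1$, invoking the equivalence between ``no cycle'' and ``simply connected'' in height $1$, and disposing of the singleton case --- is correct.

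One claim in your argument for the third bullet is false, though harmlessly so: it is not true that every point of $\mathfrak{Y}$ is an extremum of $\mathfrak{Y}$. The order on $\mathfrak{Y}=\{a,b,c,d\}$ is $d<c<a$ and $d<c<b$ (this is forced by the continuity verification in the proof of Corollary \ref{coroy}, which needs $d\le c$ and $c\le a,b$), so $\mathfrak{Y}$ has height $2$, the point $c$ is neither maximal nor minimal, and $\E(A)=\{a,b,d\}\neq A$. Your description of $c$ and $d$ as ``the two lower points'' suggests you pictured a height-$1$ poset, which would in fact be a cycle rather than $\mathfrak{Y}$. The repair is one line: $a$ and $b$ are maximal (and not minimal) in $A$, hence lie in $\E(A)\subseteq\E(X)$; since each is above $c$ in $X$, neither is minimal in $X$, so both are maximal in $X$, and two distinct maximal points of $X$ have no common upper bound. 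From there Corollary \ref{coroy} applies exactly as you say and yields the contradiction.
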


We do not know if the converse of Corollary \ref{maincoro} holds. In the remaining of the paper we prove a converse in a particular case and analyze a delicate example.

\bigskip

\section{The max strategy}

\begin{teo} \label{teomax}
Let $X$ be a finite $T_0$ space with maximum $x$. Then the cop has a strategy in $X$. Moreover, there exists a loop $\zeta_X:[0,1]\to X$ at $x$ which has a coincidence with any other path in $X$ and, furthermore, for every neighborhood $U$ of $0$ in $[0,1]$ and every continuous map $\rho:U\to X$, there is a coincidence between $\zeta_X\restr_U$ and $\rho$. The map $\zeta_X$ has the following property: if $X$ has a minimum $y$, then $\zeta_X^{-1}(y)$ is dense in $[0,1]$.
\end{teo}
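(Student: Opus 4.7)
The plan is to prove this by induction on $|X|$. The base case $|X| = 1$ is trivial: take $\zeta_X$ constant at $x$. For the inductive step I let $x_1, \ldots, x_m$ denote the elements of $X$ covered by $x$; each minimal open set $U_{x_i}$ is a finite $T_0$ space strictly smaller than $X$ with maximum $x_i$, so by the inductive hypothesis I obtain loops $\zeta_{U_{x_i}}:[0,1]\to U_{x_i}$ satisfying the full conclusions of the theorem.

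I construct $\zeta_X$ by gluing reparametrized copies of these strategies. Setting $t_n = 1/n$ for $n\ge 1$, I declare $\zeta_X(0)=x$ and $\zeta_X(t_n)=x$ for all $n\ge 1$, and on each open interval $(t_{n+1},t_n)$ I let $\zeta_X$ be the affine reparametrization of $\zeta_{U_{x_{i(n)}}}$, where $i(n)\in\{1,\ldots,m\}$ is chosen so that every value is attained infinitely often. Continuity at each $t_n$ and at $0$ is automatic because $x$ is the maximum of $X$, so the only open set containing $x$ is $X$ itself and any function with value $x$ at a point is automatically continuous there. The density clause also follows immediately: if $y=\min(X)$ then $y$ is the minimum of every $U_{x_i}$, the preimage $\zeta_{U_{x_i}}^{-1}(y)$ is dense in $[0,1]$ by induction, so $\zeta_X^{-1}(y)$ is dense in each $(t_{n+1},t_n)$ and hence in $[0,1]$.

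The main work is the coincidence property. Given a continuous $\rho:U\to X$ on a neighborhood $U$ of $0$, if $\rho(0)=x$ there is a coincidence at $0$; otherwise $\rho(0)\le x_j$ for some $j$, and $\rho^{-1}(U_{x_j})$ is an open neighborhood of $0$ containing some $[0,\delta']\subseteq U$. I choose $N$ with $i(N)=j$ and $[t_{N+1},t_N]\subseteq[0,\delta']$, and reparametrize $\rho\restr_{[t_{N+1},t_N]}$ as $\bar\rho:[0,1]\to U_{x_j}$ using the same affine identification that was used to place a copy of $\zeta_{U_{x_j}}$ on $(t_{N+1},t_N)$; then coincidences of $\zeta_X$ with $\rho$ on $(t_{N+1},t_N)$ correspond exactly to coincidences of $\zeta_{U_{x_j}}$ with $\bar\rho$ on $(0,1)$. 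Applying the inductive hypothesis to $\zeta_{U_{x_j}}$ on the neighborhood $[0,\tfrac12]$ of $0$ yields a coincidence at some $s^*\in[0,\tfrac12]$, and if $s^*>0$ I translate back to $t^*\in(t_{N+1},t_N]$ and finish.

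The hard part I expect is the degenerate case $s^*=0$, where $\bar\rho(0)=\zeta_{U_{x_j}}(0)=x_j$ corresponds to $\rho(t_{N+1})=x_j$ while $\zeta_X(t_{N+1})=x$, so no genuine coincidence results at the boundary. To dispose of it I vary $N$: if the degeneracy persists for every admissible $N$, then $\rho(t_{N+1})=x_j$ for infinitely many $N$ with $t_{N+1}\to 0$, and the finite-space convergence criterion (a sequence converges in a finite $T_0$ space iff it is eventually below the limit) forces $\rho(0)=x_j$ itself. I then recurse inside the nested self-similar construction of $\zeta_X$, which contains reparametrized copies of each $\zeta_{U_{x_{j,k}}}$ within every visit to $U_{x_j}$, running the same case analysis one level deeper (now with $x_j$ playing the role of $x$). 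Since each descent strictly decreases the size of the ambient poset and $|X|<\infty$, the recursion terminates, producing an honest coincidence and closing the induction.
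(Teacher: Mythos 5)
Your induction, your continuity argument, and the reduction to the case $\rho(0)\in U_{x_j}$ are all fine, but the degenerate case you flag as ``the hard part'' is exactly where the proof breaks, and the recursion you sketch does not close it. Because you place the value $x$ at every junction point $t_n=1/n$ and copy $\zeta_{U_{x_{i(n)}}}$ only onto the \emph{open} interval $(t_{n+1},t_n)$, a coincidence of $\bar\rho$ with $\zeta_{U_{x_j}}$ at parameter $0$ translates to the point $t_{N+1}$, where $\zeta_X$ equals $x$ while $\rho$ equals $x_j$: no coincidence. Your deduction that persistent degeneracy forces $\rho(0)=x_j$ is correct, but it leads nowhere as stated. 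Running ``the same case analysis one level deeper with $x_j$ playing the role of $x$'' lands you in the branch $\bar\rho(0)=x_j=\max U_{x_j}$, which at level zero produced an honest coincidence only because $\zeta_X(0)=x$; one level down it reproduces the same fake coincidence at the junction $t_{N+1}$. Nor is any further descent available: your descent is driven by $\rho(0)$ lying strictly below the current maximum, and you have just shown $\rho(0)$ \emph{equals} the maximum of the current ambient space $U_{x_j}$, so the ambient poset does not shrink. Finally, the information $\rho(0)=x_j$ cannot be converted directly into a coincidence: $\zeta_X(0)=x\neq x_j$, the set $\rho^{-1}(x_j)$ need not be open (as $x_j$ need not be minimal), and $\zeta_X^{-1}(x_j)$ is not dense, so none of the usual finishing moves apply. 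Trying to descend instead along points $s_k\to 0$ where $\rho$ drops below $x_j$ fails for a structural reason: the open sets $\rho^{-1}(U_{x_{j,k}})$ around such points need not contain any full grid interval of your construction, whereas the whole argument at level zero relied on $\rho^{-1}(U_{x_j})$ containing an entire neighborhood of $0$.

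The paper sidesteps the degenerate case by choosing the junction values differently: it sets $\zeta_X(\frac{1}{k})=x_{\overline{k}}$ rather than $x$, places a full \emph{closed} copy of $\zeta_{U_{x_{\overline{k}}}}$ on $[\frac{1}{k},\frac{1}{2}(\frac{1}{k}+\frac{1}{k-1})]$, and joins consecutive copies by connecting paths (constant at the minimum $y$ in their interiors when $y$ exists, which also yields the density clause). Since $\zeta_{U_{x_{\overline{k}}}}$ is a loop at $x_{\overline{k}}$ and $\zeta_X$ takes that same value at both endpoints of the closed subinterval, every coincidence supplied by the inductive hypothesis applied to $\rho$ restricted to that subinterval --- endpoints included --- is a genuine coincidence with $\zeta_X$, and only the plain ``coincidence with any path'' form of the inductive hypothesis is needed. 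If you adjust your junction values in this way your argument closes with no recursion; as written, the endpoint case remains a genuine gap.
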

\begin{proof}
The proof is by induction on $n=\# X$. If $n=1$, the unique path $\zeta_X:[0,1]\to X$ satisfies all the requirements. Let $X$ be any finite $T_0$ space with maximum $x$. Let $x_0,x_1,\ldots ,x_{m-1}$ be the points covered by $x$. By induction there exist paths $\zeta_{U_{x_i}}:[0,1]\to U_{x_i}$ satisfying the conditions in the statement for every $0\le i\le m-1$. We define $\zeta=\zeta_X:[0,1]\to X$ as follows. As required $\zeta(0)=\zeta(1)=x$. For each $k\ge 2$, $\zeta (\frac{1}{k})=x_{\overline{k}}$, where $\overline{k}$ is the class of $k$ modulo $m$. Moreover $\zeta$ restricted to the interval $[\frac{1}{k}, \frac{1}{2}(\frac{1}{k}+\frac{1}{k-1})]$ is (the linear reparametrization of) $\zeta_{U_{x_{\overline{k}}}}$ for $k\ge 2$. Finally, $\zeta$ restricted to $[\frac{1}{2}(\frac{1}{k}+\frac{1}{k-1}), \frac{1}{k-1}]$ is a path in $X$ from $x_{\overline{k}}$ to $x_{\overline{k-1}}$ if $k\ge 3$, and a path from $x_{\overline{2}}$ to $x$ if $k=2$. If $X$ has a minimum $y$, we take this path to be constant with value $y$ in $(\frac{1}{2}(\frac{1}{k}+\frac{1}{k-1}), \frac{1}{k-1})$ (see Figure \ref{figmax}). Note that $\zeta$ is continuous in $0$ since $x$ is the maximum of $X$. Note also that if $X$ has minimum $y$, then $y$ is the minimum of every $U_{x_i}$, so by induction $\zeta^{-1}(y)$ is dense in $[0,1]$.

\begin{figure}[h] 
\begin{center}
\includegraphics[scale=0.55]{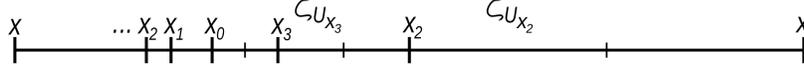}
\caption{The path $\zeta$ in the case that the maximum covers $m=4$ points.}\label{figmax}
\end{center}
\end{figure}

Let $U$ be a neighborhood of $0$ in $[0,1]$ and let $\rho:U\to X$ be a continuous map. We want to prove that $\zeta\restr_U$ and $\rho$ have a coincidence point. If $\rho(0)=x$, then $\rho(0)=\zeta(0)$. Suppose then that $\rho(0)\neq x$. Then $\rho(0)\in U_{x_i}$ for some $0\le i\le m-1$. Since $U_{x_i}$ is open, there is a neighborhood $V$ of $0$ in $U$ mapped into $U_{x_i}$ by $\rho$. There exists $k\in \N$ such that $\overline{k}=i$ and $I=[\frac{1}{k}, \frac{1}{2}(\frac{1}{k}+\frac{1}{k-1})]\subseteq V$. Thus the restriction of $\zeta$ to $I$ is (the linear reparametrization of) $\zeta_{U_{x_i}}$ and therefore it has a coincidence with $\rho\restr_I$.
\end{proof}

\begin{defi}
Let $X$ be a finite $T_0$ space with maximum. Any path $\zeta_X:[0,1]\to X$ as constructed in the proof of Theorem \ref{teomax} will be called \textsl{the} \textit{max strategy} in $X$, even though it is not uniquely determined by $X$. Any linear reparametrization $[t_0,t_1]\to X$ of $\zeta_X$ will also be called the max strategy in $X$.
\end{defi}

\begin{teo} \label{main2}
Let $X$ be a finite $T_0$ space such that the following conditions are satisfied simultaneously:
\begin{enumerate}
\item $\E (X)$ is simply-connected,
\item $S_{2,1}$ and $S_{3,0}^{op}$ are not subspaces of $\E(X)$,
\item There is no subspace $A$ of $X$ homeomorphic to $\mathfrak{Y}$ with $\E(A) \subseteq \E(X)$.
\item There is no subspace $A$ of $X$ homeomorphic to $\mathfrak{Y}^{op}$ with $\E(A) \subseteq \E(X)$
\end{enumerate}
Then the cop has a strategy in $X$.
\end{teo}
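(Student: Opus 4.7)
The plan is to promote the strong strategy on $\E(X)$ supplied by Theorem \ref{main1} to a strong strategy on $X$, by splicing in max strategies from Theorem \ref{teomax} around each maximum visit and by inserting dense ``spike'' visits to non-extremal points around each minimum visit. First, I unpack conditions (3) and (4): if $z \in X$ is non-extremal and two distinct maxima $M_1, M_2 \in X$ lie above $z$, picking any minimum $m < z$ exhibits $\{m, z, M_1, M_2\} \cong \mathfrak{Y}$ with extrema in $\E(X)$, violating (3); dually, (4) rules out two minima below a non-extremal. Hence every non-extremal $z$ has a unique maximum $M(z) \in \E(X)$ above it and a unique minimum $m(z) \in \E(X)$ below it. In particular, for each maximum $M$ of $\E(X)$ the downset $U_M^X = \{x \in X : x \le M\}$ is open in $X$ with maximum $M$, so Theorem \ref{teomax} provides a max strategy $\zeta_M : [0,1] \to U_M^X$ with catch-all: for any neighbourhood $U$ of $0$ in $[0,1]$ and any continuous $\rho : U \to X$, $\zeta_M|_U$ and $\rho$ coincide somewhere. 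Conditions (1) and (2) are exactly the hypotheses of Theorem \ref{main1} on the height-$1$ space $\E(X)$, which yields an explicit strong strategy $\gamma_\E : [0, l] \to \E(X)$ of the shape described in its proof.

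Next, I construct $\gamma : [0, l] \to X$ by modifying $\gamma_\E$ in two ways, leaving $\gamma = \gamma_\E$ elsewhere. Around each isolated instant $t^*$ at which $\gamma_\E(t^*) = M$ is a maximum, I splice into a small symmetric window a Zeno-accumulating sequence of reparametrized copies of $\zeta_M$ together with its time-reverse, arranged so that every point of the window lies in an arbitrarily small subinterval which is a full copy of $\zeta_M$ starting at one of its endpoints; the construction mirrors the recursive nesting used inside the proof of Theorem \ref{teomax}. The Example \ref{ejnhc}-type behaviour of $\gamma_\E$ at the two boundary maxima $a_1, a_{l-1}$ is already of this shape, so there I only need to lift its non-Hausdorff cone strategy to $U_{a_1}^X$ and $U_{a_{l-1}}^X$ via $\zeta_{a_1}$ and $\zeta_{a_{l-1}}$. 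On each minimum-visit interval $(2i-1, 2i+1)$ I additionally insert isolated spikes $\gamma(t^*) = a$ for each non-extremal $a \in X$ with $m(a) = a_{2i}$, arranged so that $\gamma^{-1}(a)$ meets every open subinterval of $(2i-1, 2i+1)$; each spike is continuous because every open neighbourhood of $a$ in $X$ must contain $m(a) = a_{2i}$.

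To check that the resulting $\gamma$ is a strong strategy, let $\rho : [0, l] \to X$ be arbitrary. Proposition \ref{prope} (its argument adapts verbatim from $\R_{\ge 0}$ to a compact interval) produces $\E(\rho) : [0, l] \to \E(X)$, and by the uniqueness of $M(a)$ and $m(a)$, whenever $\rho$ is constantly a non-extremal $a$ on an open interval $\E(\rho)$ there takes the value $M(a)$ or $m(a)$. Since $\gamma_\E$ is a strong strategy in $\E(X)$, fix $t_0$ with $\gamma_\E(t_0) = \E(\rho)(t_0) =: p$. If $p$ is a maximum, then $\rho(t_0) \in U_p^X$; openness of $U_p^X$ gives a neighbourhood $V$ of $t_0$ with $\rho(V) \subseteq U_p^X$, the Zeno layout provides a full copy of $\zeta_p$ inside $V$ starting at a point of $V$, and the catch-all of Theorem \ref{teomax} yields a coincidence. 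If $p$ is a minimum, tracing the iterative construction of $\E(\rho)$ shows that either $\rho(t_0) = p$ (so $\gamma(t_0) = p$ is a coincidence) or $\rho$ is constantly some non-extremal $a$ with $m(a) = p$ on an open interval $J \ni t_0$, and the density of $\gamma^{-1}(a)$ on the minimum-visit interval forces $\gamma^{-1}(a) \cap J \ne \emptyset$. The main obstacle I expect is the Zeno bookkeeping: organizing on a bounded interval countably many dense spike sequences together with the nested max-strategy copies while keeping $\gamma$ continuous at each transition in the finite-space sense. The Hasse-diagram continuity criterion used throughout the paper makes each individual splice easy, and the recursive template of Theorem \ref{teomax} provides the blueprint for the global arrangement.
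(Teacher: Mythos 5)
Your overall architecture matches the paper's: reduce via conditions (1)--(2) to the shape of Figure \ref{figgeneral}, read conditions (3)--(4) as saying that every non-extremal point has a unique maximal point above it and a unique minimal point below it (this deduction of yours is correct and is exactly what the paper uses implicitly), localize the robber along the fence $\{a_1,\dots,a_{l-1}\}$, and catch him near maxima with max strategies of down-sets placed so that the relevant time is the $0$-end of a copy of $\zeta$ (the paper uses $\zeta_{U_{a_1}}$ and $\zeta_{U_{a_{2i\pm 1}}\cap F_{a_{2i}}}$; your Zeno arrangement achieves the same effect). The fatal problem is your treatment of the minima. The map you describe on $(2i-1,2i+1)$ does not exist: since $a_{2i}$ is minimal, $\{a_{2i}\}=U_{a_{2i}}$ is open in $X$, so $\gamma^{-1}(a_{2i})$ must be open in $[0,l]$, and hence cannot meet an interval in which $\gamma^{-1}(a)$ is dense for some non-extremal $a>a_{2i}$. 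Concretely, at any $t$ with $\gamma(t)=a_{2i}$ every neighbourhood would contain a spike with value $a\notin U_{a_{2i}}$, so $\gamma$ would be discontinuous there; your continuity check only treats the spike points themselves. Nor can you drop the $a_{2i}$-valued stretches, because $\gamma^{-1}(a_{2i})$ must be dense in $(2i-1,2i+1)$ to catch a robber whose (open) preimage of $a_{2i}$ is a tiny interval.

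The paper's mechanism at the minima is different and is the idea you are missing: on $(2i-1,2i+1)$ it places, on finitely many pairwise disjoint \emph{closed} subintervals, full max strategies of the down-sets $U_{b_{2i,j}}$ (and of $U_{a_{2i\pm 1}}\cap F_{a_{2i}}$ at the two ends), separated by open stretches constant at $a_{2i}$; this keeps $\gamma^{-1}(a_{2i})$ open and dense. If the localized time $t$ has $\rho(t)\in U_{b_{2i,j}}$ and $\rho$ never takes the value $a_{2i}$ on $(2i-1,2i+1)$, condition (3) implies $U_{b_{2i,j}}\smallsetminus\{a_{2i}\}$ is a connected component of $X\smallsetminus\{a_{2i}\}$, so $\rho$ is trapped there for the whole interval and is caught on the fixed closed subinterval carrying $\zeta_{U_{b_{2i,j}}}$. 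This connectedness/trapping argument is what replaces density. A secondary gap: your dichotomy ``either $\rho(t_0)=p$ or $\rho$ is constant at a single non-extremal $a$ on an open interval around $t_0$'' when $\E(\rho)(t_0)=p$ is a minimum is not justified by Proposition \ref{prope} ($\rho$ may oscillate among several non-extremal points over $p$ near $t_0$); the paper sidesteps this by localizing with the explicit retraction $r:X\to\{a_1,\dots,a_{l-1}\}$ rather than with $\E(\rho)$.
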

\begin{proof}
The fact of $\E(X)$ being simply-connected is equivalent to $\E(X)$ containing no cycles.

Just as in the proof of Theorem \ref{main1}, we can assume $\E(X)$ looks as in Figure \ref{figgeneral}. We may assume $a_0$ and $a_l$ are minimal as well: adding points $a_{-1}$ and $a_{l+1}$ to $X$ if needed, as described in the proof of Theorem \ref{main1}, comparable with just one point of $X$ each, produces a new space satisfying all four conditions and having $X$ as a retract.

We define a path $\gamma: [0,l]\to X$. The first part of the definition looks very similar to the one of the strategy constructed in Theorem \ref{main1}, but then, in some intervals we will use the max strategy of certain subspaces of $X$. We define $\gamma(2i+1)=a_{2i+1}$ for $0\le i\le \frac{l}{2}-1$. The interval $[2i-1,2i+1]$ (for $1\le i \le \frac{l}{2}-1$) is subdivided in $m_{2i}+2$ intervals by taking points $2i-1<t_{2i,1}<t_{2i,2}<\ldots<t_{2i,m_{2i}}<t_{2i,m_{2i}+1}<2i+1$. We define $\gamma(t_{2i,j})=b_{2i,j}$ for $1\le j\le m_{2i}$. 

The intervals $[2i-1,t_{2i,1}]$ for $1\le i\le \frac{l}{2}-1$ are divided in two parts by taking a point $2i-1<r_{2i-1}<t_{2i,1}$. The intervals $[t_{2i,j},t_{2i,j+1}]$ for $1\le j\le m_{2i}$ are divided in two by taking $t_{2i,j}<r_{2i,j}<t_{2i,j+1}$. 

In the interval $[0,1]$, $\gamma$ is the inverse path of the max strategy of $U_{a_1}$. In $[l-1,l]$, $\gamma$ is the max strategy of $U_{a_{l-1}}$. In the intervals $[t_{2i,j}, r_{2i,j}]$ for $1\le j\le m_{2i}$, $\gamma$ is the max strategy of $U_{b_{2i,j}}$. In $(r_{2i,j}, t_{2i,j+1})$ for $1\le j\le m_{2i}$, it is constant with value $a_{2i}$. In $[t_{2i,m_{2i}+1},2i+1]$ it is the inverse path of the max strategy for $U_{a_{2i+1}}\cap F_{a_{2i}}$. Here $F_x$ denotes the closure of $\{x\}$, i.e. the set of points which are greater than $x$. Finally, in $[2i-1,r_{2i-1}]$ it is the max strategy of $U_{a_{2i-1}}\cap F_{a_{2i}}$ while in $(r_{2i-1},t_{2i,1})$ it is the constant function $a_{2i}$. Clearly $\gamma:[0,l]\to X$ is continuous. We will prove that any other path $\rho:[0,l]\to X$ has a coincidence with $\gamma$.

Let $\rho:[0,l]\to X$ be a continuous map.

Let $Y$ be the subspace of $X$ formed by points $a_1,a_2, \ldots, a_{l-1}$. We define a retraction $r:X\to Y$ as follows. For $0\le i\le \frac{l}{2}-1$, all the points of $U_{a_{2i+1}}$ not in $Y$ are mapped to $a_{2i+1}$. For $1\le i\le \frac{l}{2}-1$ and $1\le j\le m_{2i}$, all the points of $U_{b_{2i,j}}$ are mapped to $a_{2i}$. The map $r$ is well-defined thanks to condition 3. It is clearly continuous.

Let $\omega:[1,l-1]\to Y$ be the map defined in the proof of Theorem \ref{main1}. By Remark \ref{obsintervalos} there exists $t\in [1,l-1]$ such that $\omega(t)=r\rho\restr_{[1,l-1]}(t)$.

Case 1: $t=1$. In this case $r \rho (1)=\omega (1)=a_1$, so $\rho (1)\in U_{a_1}$. Let $U=\rho^{-1}(U_{a_1})\cap [0,1]$. Then $U$ is a neighborhood of $1$ in $[0,1]$. By Theorem \ref{teomax}, $\rho$ and the inverse of $\zeta_{U_{a_1}}$ have a coincidence in $U$. Then so do $\rho$ and $\gamma$.

Case 2: $t=l-1$. This case is symmetric to case 1.

Case 3: $t= 2i-1$ for some $2\le i\le \frac{l}{2}-1$. In this case $r\rho(t)=\omega (t)=a_{2i-1}$, so $\rho(t)\in U_{a_{2i-1}}$. If $\rho (t)=a_{2i-1}$, then $\rho(t)=\gamma(t)$. Otherwise $\rho(t)<a_{2i-1}$. Then either $a_{2i-2}\le \rho(t)$ or $a_{2i}\le \rho (t)$. In the first case $U_{\rho(t)}\subseteq U_{a_{2i-1}}\cap F_{a_{2i-2}}$ by condition 4. Therefore $\rho$ and the inverse of $\zeta_{U_{a_{2i-1}}\cap F_{a_{2i-2}}}$ have a coincidence in $[t_{2i-2,m_{2i-2}+1},2i-1]$. In the second case $U_{\rho(t)}\subseteq U_{a_{2i-1}} \cap F_{a_{2i}}$, and $\rho$ and $\gamma$ have a coincidence in $[2i-1, r_{2i-1}]$. 

Case 4: $t\in (2i-1,2i+1)$ for some $1\le i\le \frac{l}{2}-1$. In this case $r \rho(t)=\omega(t)=a_{2i}$, so $\rho(t)=a_{2i}$ or $\rho(t)\in U_{b_{2i,j}}$ for some $j$. Note that by Theorem \ref{teomax}, $\gamma^{-1}(a_{2i})\cap[2i-1,2i+1]$ is dense in $[2i-1,2i-1]$, so in the first case $\rho$ and $\gamma$ have a coincidence in $[2i-1,2i+1]$. We can assume then that $\rho(t)\in U_{b_{2i,j}}$ for some $j$ and that $\rho(t')\neq a_{2i}$ for every $t'\in (2i-1,2i+1)$. By condition 3, $U_{b_{2i,j}}\smallsetminus \{a_{2i}\}$ is a connected component of $X\smallsetminus \{a_{2i}\}$, so $\rho((2i-1,2i+1))\subseteq U_{b_{2i,j}}\smallsetminus \{a_{2i}\}$. In particular $\rho$ and $\gamma$ have a coincidence in $[t_{2i,j}, r_{2i,j}]$. 

\end{proof}

\section{Self-similarity} \label{sectionfractal}

The fourth condition in the statement of Theorem \ref{main1} is not needed for the cop to have a strategy. This can be seen directly from the fact that the cop has a strategy in any space with maximum. In this section we describe an example in which the cop has a strategy, but any such strategy has to be more sophisticated than any we have constructed so far.

Self-similarity is a feature present in fractals, in which proper parts of an object look like the global object. This is a property our example will have.

Let $X$ be the space in Figure \ref{fractal}. We describe a strong strategy (in fact a path $\gamma: [-1,2]\to X$) for the cop in $X$. First we define $\sigma=\gamma \restr_{[0,1]}:[0,1]\to X$. We divide $[0,1]$ in four intervals of equal length. In the first interval $I_1=[0,\frac{1}{4}]$, $\sigma$ is defined as $d$ in the extremes and as $f$ in the interior. In the third interval $I_3=[\frac{1}{2},\frac{3}{4}]$, $\sigma$ is defined as $b$ in the left extreme, as $d$ in the right extreme and as $e$ in the interior. We also define $\sigma(1)=b$. This is stage 1 in the definition of $\sigma$. In the second and fourth intervals $I_2=[\frac{1}{4},\frac{1}{2}]$, $I_4=[\frac{3}{4},1]$, $\sigma\restr_{I_2}$ and $\sigma\restr_{I_4}$ are the linear reparametrizations of $\sigma:[0,1]\to X$. Concretely, to define $\sigma$ in $I_2$ we repeat the construction above: we divide $I_2$ in four intervals of equal length and define $\sigma$ in the first and third exactly as before. We do the same for $I_4$. Note that $\sigma(\frac{1}{4})$, $\sigma(\frac{1}{2})$, $\sigma(\frac{3}{4})$ and $\sigma(1)$ are well-defined. Now, $\sigma$ has been defined in all of $[0,1]$ with exception of four (open) intervals of length $\frac{1}{16}$. This is the end of stage 2. In general, in stage $n$ we define $\sigma$ in $2^n$ closed intervals of length $\frac{1}{4^n}$. We repeat this process indefinitely. Now $\sigma$ has been defined in all of $[0,1]$ with exception of a $G_{\delta}$ set $C$ of Lebesgue measure $0$. We will not need to know much about $C$. It is easy to see that in stage $n$ of the construction, $\sigma$ has been defined in the complement of a union of $2^n$ open intervals, and $t=\frac{1}{3}$ always lies in the first of these. Thus $\frac{1}{3}\in C$, and in fact it is the minimum of $C$. The supremum of $C$ is $1\notin C$. Note also that $\sigma^{-1}(f)\cap[0,\frac{1}{3}]$ is dense in $[0,\frac{1}{3}]$. We define $\sigma\restr_{C}$ to be the constant function $b$. We claim that $\sigma:[0,1]\to X$ is continuous. 

\begin{figure}[h] 
\begin{center}
\includegraphics[scale=0.65]{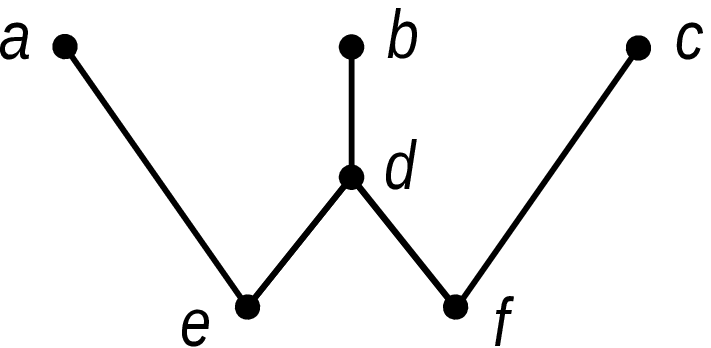}\\

\vspace{1cm}

\includegraphics[scale=0.55]{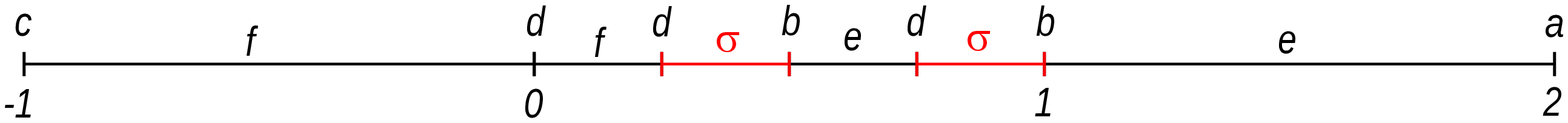}
\caption{The space $X$ and its fractal strategy $\gamma$.}\label{fractal}
\end{center}
\end{figure}

Indeed, let $t\in [0,1]$. We note that in each case there is a neighborhood of $t$ which is mapped inside $U_{\sigma(t)}$. If $\sigma (t)=b$, we can take the neighborhood as all $[0,1]$. If $\sigma(t)$ is equal to $e$ or $f$, then there is an open interval containing $t$ which is also mapped to $\sigma(t)$. If $\sigma (t)=d$, there is an open interval $I$ containing $t$ with $\sigma(I\smallsetminus \{t\})\subseteq \{e, f\}\subseteq U_d$.

Before we complete the definition of the strategy for the cop, we discuss some properties of $\sigma$. 

Property (i): if $t\in \sigma^{-1}(b)$, then any neighborhood $U$ of $t$ contains an interval $[s,s']$, which in turn contains a point $s''\in (s,s')$, such that $\sigma([s,s''))=\{e\}$, $\sigma(s'')=d$ and $\sigma((s'',s'])=\{f\}$. Indeed, by definition there is some $n\ge 1$ such that in stage $n$ one of the open intervals, say $J$, in which $\sigma$ has not been defined yet lies inside $U$. Thus, in stage $n+1$, the closure $\overline{J}$ of $J$ is divided in four closed intervals $J_1,J_2,J_3,J_4$ and $\sigma$ is defined in $J_1$ and $J_3$. Then, in stage $n+2$, $J_4$ is divided in four intervals $J_1',J_2',J_3',J_4'$ and $\sigma$ is defined in the first and the third one. If $J_3=[r,r']$ and $J_1'=[r',r'']$, the points $s=\frac{r+r'}{2},s'=\frac{r'+r''}{2},s''=r'$ satisfy the required condition. Note in particular that for any neighborhood $U$ of $t$, $\sigma(U)=U_b$. 


Property (ii): If $r_0<r_1\in [0,1]$ and $\sigma(r_0)=f, \sigma(r_1)=e$, then there exists $r_0<r<r_1$ such that $\sigma(r)=b$. This is easy to prove by considering the stages $n,m$ in which $\sigma(r_0)$ and $\sigma(r_1)$ have been defined and analyzing the cases $n=m$, $n<m$ and $n>m$.

Property (iii): $\sigma^{-1}(\{e,f\})$ is dense in $[0,1]$. For $t\in [0,1)$ it is easy to see that if $\sigma (t)$ was defined in stage $n$, then any neighborhood of $t$ already contains a point $s$ such that $\sigma (s)$ was also defined in stage $n$ and it is equal to $e$ or $f$. For $t=1$, $\sigma(t)=b$ so the assertion follows directly from Property (i).

We define now the strong strategy $\gamma:[-1,2]\to X$ for the cop: $\gamma (-1)=c$, $\gamma\restr_{(-1,0)}$ is constant with value $f$, $\gamma\restr_{[0,1]}=\sigma$, $\gamma\restr_{(1,2)}$ is constant with value $e$, and $\gamma(2)=a$. Clearly $\gamma$ is continuous. Let $\rho:[-1,2]\to X$. We want to prove that there exists a point in which $\gamma$ and $\rho$ coincide. Assume there is no such coincidence.

Let $t\in [0,1]$ be such that $\sigma(t)=b$. We claim then that $\rho(t)\in \{a,c\}$. If $\rho(t)\in \{e,f\}$, there is a neighborhood $U$ of $t$ mapped to $\rho(t)$ by $\rho$. But since $\sigma(t)=b$, by Property (i), $\sigma(U\cap [0,1])=U_b \ni \rho(t)$. Thus, $\gamma$ and $\rho$ have a coincidence in $U\cap [0,1]$. If $\rho(t)=d$, there is a neighborhood $U$ of $t$ mapped to $U_d$ by $\rho$. By Property (i), there exists an interval $[s,s']\subseteq U$ and a point $s''\in (s,s')$ such that $\sigma\restr_{[s,s'')}$ is $e$, $\sigma(s'')=d$, and $\sigma\restr_{(s'',s']}$ is $f$. But then $\sigma\restr_{[s,s']}$ is a strong strategy in the space $U_d$, so it must have a coincidence with $\rho\restr_{[s,s']}$. Thus, $\rho(t)$ is equal to $a$ or $c$. 

In particular, when $t=1$, we deduce $\rho(1)=c$, since if $\rho(1)=a$, there would be a coincidence in $(1,2]$. Let $t_1=\min \{t\in [0,1] : \rho (t)=c\}$. We claim that $t_1>\frac{1}{3}$. Otherwise, since $\gamma^{-1}(f)\cap [-1,\frac{1}{3}]$ is dense in $[-1,\frac{1}{3}]$, $\rho\restr_{[-1,t_1]}$ could not go through $f$, so it should be constant, and we would have $\gamma(-1)=c=\rho(-1)$. Since $\frac{1}{3}<t_1$ and $\sigma(\frac{1}{3})=b$, then $\rho(\frac{1}{3})=a$. Let $t_0=\max \{t\in [0,t_1] : \rho(t)=a\}$.

Since $U_a=\{a,e\}$ and $U_c=\{c,f\}$ are open, by definition of $t_0$ and $t_1$ there exist $t_0<s_0<s_1<t_1$ such that $\sigma\restr_{(t_0,s_0)}$ is constant with value $e$ and $\sigma\restr_{(s_1,t_1)}$ is constant with value $f$. Since $\sigma^{-1}(\{e,f\})$ is dense in $[0,1]$ by Property (iii), and $\gamma$ and $\rho$  are coincidence-free, there exist $r_0\in (t_0,s_0)$ and $r_1\in (s_1,t_1)$ such that $\sigma (r_0)=f$ and $\sigma (r_1)=e$. By Property (ii) there exists $r\in (r_0,r_1)$ such that $\sigma (r)=b$. By Property (i), $\rho(r)\in \{a,c\}$, which contradicts the definition of $t_0$ or $t_1$.


%

\section*{Appendix}

\begin{teo} \label{admissible}
Let $X$ be a topological space, $\mathcal{U}$ a finite open cover of $X$ and $\gamma :\R_{\ge 0}\to X$ a curve. Then there exists a minimal $\gamma$-admissible subdivision of $\R_{\ge 0}$.
\end{teo}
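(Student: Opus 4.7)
My plan is to first produce \emph{some} $\gamma$-admissible subdivision of $\R_{\ge 0}$, and then massage it by a greedy left-to-right merging procedure into a minimal one. The finiteness of $\mathcal{U}$ will be used at one crucial pigeonhole step.

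For the first part, note that $\{\gamma^{-1}(U)\}_{U \in \mathcal{U}}$ is a (finite) open cover of $\R_{\ge 0}$. On each compact interval $[n,n+1]$ for $n\in\N$, a Lebesgue number $\epsilon_n>0$ exists, and I subdivide $[n,n+1]$ into finitely many closed intervals of length less than $\epsilon_n$, each therefore contained in some $\gamma^{-1}(U)$. Concatenating over all $n$ yields a $\gamma$-admissible subdivision $\{[s_k, s_{k+1}]\}_{k\ge 0}$ of $\R_{\ge 0}$ with $s_k\to\infty$; in particular it is locally finite.

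For the second part, I greedily merge starting from the left. Set $k_0=0$, and having defined $k_n$, let $A_n = \{k'\ge k_n : \gamma([s_{k_n}, s_{k'}]) \subseteq U \text{ for some } U \in \mathcal{U}\}$. Since containment in $\gamma^{-1}(U)$ is inherited by sub-intervals, $A_n$ is an initial segment of $\{k_n, k_n+1,\ldots\}$, and $A_n$ is nonempty because the original subdivision was admissible. Two cases arise. If $A_n$ is all of $\{k_n, k_n+1, \ldots\}$, then since $\mathcal{U}$ is finite, by pigeonhole some $U^* \in \mathcal{U}$ realises the membership $[s_{k_n},s_{k'}]\subseteq \gamma^{-1}(U^*)$ for infinitely many $k'$; taking the union gives $\gamma([s_{k_n},\infty))\subseteq U^*$, and I close off the subdivision with the unbounded admissible interval $[s_{k_n},\infty)$. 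Otherwise $A_n$ has a finite maximum $K_n$, and I set $k_{n+1}=K_n$, $t_{n+1}=s_{K_n}$; by definition $[s_{k_n},t_{n+1}]\subseteq \gamma^{-1}(U)$ for some $U$, so the new interval is admissible.

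The verification then splits into three routine checks. (i) \emph{Coverage and local finiteness:} either the process terminates with an unbounded admissible tail, or $(k_n)$ is a strictly increasing sequence in $\N$, so $t_n = s_{k_n}$ is a subsequence of a sequence going to $\infty$ without accumulation points, hence the new subdivision covers $\R_{\ge 0}$ and is locally finite. (ii) \emph{Admissibility:} each merged interval is admissible by construction. (iii) \emph{Minimality:} for two adjacent merged intervals $[t_n,t_{n+1}]$ and $[t_{n+1},t_{n+2}]$, their union contains $[s_{k_n}, s_{K_n+1}]$, which by maximality of $K_n$ is not contained in any $\gamma^{-1}(U)$; \emph{a fortiori} neither is $[t_n,t_{n+2}]$, so the two cannot be fused.

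The main point to be careful about is the dichotomy $A_n$ finite versus infinite: without the finiteness of $\mathcal{U}$, the infinite case could fail to yield a single $U^*$ covering the whole tail, and the process might produce a bounded accumulation of break-points. The pigeonhole argument, enabled by $\mathcal{U}$ being finite, is the only non-routine ingredient; the rest is bookkeeping to handle the boundary cases (a greedy step that swallows the tail, or the original subdivision already having an unbounded last interval).
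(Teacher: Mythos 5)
Your proof is correct, and it takes a genuinely different route from the paper's. The paper also begins with the Lebesgue-number construction of an initial admissible subdivision, but then obtains a minimal one abstractly: it orders all $\gamma$-admissible subdivisions by refinement, shows that every chain has a lower bound (the delicate point being that the union-intervals $\widetilde{I}$ of a chain still form a locally finite admissible subdivision, where finiteness of $\mathcal{U}$ enters via a pigeonhole on the witnesses), and invokes Zorn's Lemma; a minimal element of that poset cannot have two adjacent intervals in the same $\gamma^{-1}(U)$, since merging them would produce something strictly coarser. Your greedy left-to-right merging replaces this with an explicit construction: the only choices made are canonical (the maximum $K_n$ of the initial segment $A_n$), so you avoid Zorn's Lemma entirely, and your pigeonhole in the infinite-$A_n$ case plays exactly the role of the paper's pigeonhole in the chain argument. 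What the paper's approach buys is that it shows more, namely that \emph{every} admissible subdivision can be coarsened to a minimal one, and the chain-limit argument is insensitive to the order structure of $\R_{\ge 0}$; what yours buys is constructiveness and a shorter verification, exploiting that $\R_{\ge 0}$ has a left endpoint so the merging can be anchored there. One small point worth making explicit in your write-up: the minimality check for the pair consisting of the last bounded interval and the unbounded tail $[t_n,\infty)$ uses the maximality of $K_{n-1}$ (which defined $t_n$) rather than of $K_n$, but it is the same one-line argument, as you indicate.
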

\begin{proof}
By a Lebesgue number argument, each interval $[n,n+1]\subseteq \R_{\ge 0}$ ($n\in \Z_{\ge 0}$) can be subdivided into finitely many closed intervals, each of which is mapped by $\gamma$ into one of the members of $\mathcal{U}$. This is a $\gamma$-admissible subdivision which could be non-minimal. The poset of $\gamma$-admissible subdivisions of $\R_{\ge 0}$ is ordered by refinement. If $\{S_{\alpha}\}_{\alpha \in \Lambda}$ is a non-empty chain of $\gamma$-admissible subdivisions, then there is a lower bound defined as follows. Fix $\alpha_0\in \Lambda$. For each interval $I$ in $S_{\alpha_0}$ let $\widetilde{I}$ be the union of the intervals containing $I$ from among all the $S_{\alpha}$ with $\alpha \in \Lambda$. It is clear that $\widetilde{I}$ is an interval and that it is not a singleton. Let $t'\notin \widetilde{I}$. Then $t'$ is in the interior of an interval in $S_{\alpha_0}$ or in the boundary of two intervals in $S_{\alpha_0}$. In the first case, suppose $t'\in (a,b)$, with $[a,b]$ a member of $S_{\alpha_0}$ (or $t'=0$ and $a=0$). Then we claim that $(a,b)\cap \widetilde{I}=\emptyset$. Otherwise, there would be an interval $J$ in some $S_{\alpha}$ containing $I$ and intersecting $(a,b)$. Since $S_{\alpha_0}$ and $S_{\alpha}$ are comparable, either $[a,b]\subseteq J$ or $J\subseteq [a,b]$. In either case, $[a,b]\subseteq \widetilde{I}$, so $t'\in \widetilde{I}$, a contradiction. Similarly, if $t'$ is in the boundary of two intervals of $S_{\alpha_0}$, say $[a,t']$ and $[t',b]$, we claim that $(a,b)\cap \widetilde{I}=\emptyset$. Otherwise, there is an interval $J$ in some $S_{\alpha}$ containing $I$ and intersecting $(a,t')$ without loss of generality. As before this implies that $[a,t']\subseteq \widetilde{I}$, a contradiction. This proves that there is a neighborhood of $t'$ disjoint with $\widetilde{I}$, so $\widetilde{I}$ is closed. It is clear that the collection $S$ of all the $\widetilde{I}$ with $I\in S_{\alpha_0}$ covers $\R_{\ge 0}$. Suppose $\widetilde{I}$ and $\widetilde{J}$ have at least two points in common, say $a, b$, for some $I,J\in S_{\alpha_0}$. Using that the family of subdivisions is a chain, there is an interval in some $S_{\alpha}$ containing both $I$ and $J$, so $\widetilde{I}=\widetilde{J}$. This proves that the intersection of two different members of $S$ is empty or one point. Since $S_{\alpha_0}$ refines $S$, the later is locally finite. Finally, we show that $S$ is $\gamma$-admissible. Suppose $\widetilde{I}$ is not contained in $\gamma^{-1}(U)$ for any $U\in \mathcal{U}$. Then for every $U\in \mathcal{U}$ there exists $\alpha_U\in \Lambda$ and $J_U\in S_{\alpha_U}$ such that $I\subseteq J_U \subsetneq \gamma^{-1}(U)$. Let $V\in \mathcal{U}$ be such that $S_{\alpha_{V}}$ is the minimum among the (finitely many) $S_{\alpha_U}$. Then $J_V$ contains all the $J_U$, so $J_V\subsetneq \gamma^{-1}(U)$ for every $U\in \mathcal{U}$. This contradicts the fact that $S_{\alpha_V}$ is $\gamma$-admissible. It is clear that $S\le S_{\alpha}$ for every $\alpha \in \Lambda$. Indeed, if $J\in S_{\alpha}$, there is $I\in S_{\alpha_0}$ intersecting $J$ in at least two points, so either $I\subseteq J$ or $J\subseteq I$. In any case $J\subseteq \widetilde{I}$, so $S_{\alpha}$ is a refinement of $S$. By Zorn's Lemma there is a minimal element in the poset of $\gamma$-admissible subdivisions of $\R_{\ge 0}$, which is then a minimal subdivision.  
\end{proof}

\textbf{Acknowledgement}: I would like to thank Kevin Piterman for valuable suggestions.

\end{document}